\documentclass[preprint,12pt]{elsarticle}

\usepackage[top=2.54cm, bottom=2.54cm, left=2.54cm, right=2.54cm]{geometry}

\usepackage{amsmath,amssymb,amsthm}
\usepackage{hyperref}

\newtheorem{theorem}{Theorem}[section]
\newtheorem{lemma}[theorem]{Lemma}
\newtheorem{proposition}[theorem]{Proposition}

\newtheorem{definition}[theorem]{Definition}

\newcommand{\dif}{\,d}

\newcommand{\EE}{\mathbb{E}}

\newcommand{\PP}{\mathbb{P}}

\begin{document}

\begin{frontmatter}

\title{Well-Posedness of Generalized Mean-Reflected McKean--Vlasov
  Backward Stochastic Differential Equations}

\author{Ruisen Qian}
\address{School of Mathematical Sciences, Fudan University,
  Shanghai, 200433, China.}
\ead{rsqian19@fudan.edu.cn}

\begin{abstract}
This paper investigates a class of generalized mean-reflected
  McKean--Vlasov type backward stochastic differential equations (BSDEs).
  Our new framework combines a mean reflection constraint on the solution's
  expectation with a generalized integral with respect to a continuous
  non-decreasing process. We establish the existence and uniqueness of the
  solution. The uniqueness is derived via stability estimates, while the
  existence is proved by employing a penalization method combined with a
  smooth approximation of the obstacle.
\end{abstract}

\begin{keyword}
Backward stochastic differential equations; McKean--Vlasov equations;
  Mean reflection; Penalization method; Skorokhod condition
\MSC[2020] 60H10
\end{keyword}

\end{frontmatter}

\section{Introduction}
Since the seminal work of Pardoux and Peng \cite{pardoux_1990} on the existence and uniqueness of adapted solutions to nonlinear backward stochastic differential equations (BSDEs), given by
\[
Y_t ={} \xi + \int_t^T f(s, Y_s, Z_s) \dif s - \int_t^T Z_s \dif B_s,
\]
the theory of BSDEs has been developed extensively and applied to stochastic control, mathematical finance, and partial differential equations. An important extension is the reflected backward stochastic differential equation (RBSDE), introduced by El Karoui et al. \cite{karoui_1997}. In an RBSDE, the first component of the solution is constrained to stay above a given obstacle process $L$. This constraint introduces an increasing process $K$ that ensures the Skorokhod condition
\[
\int_0^T (Y_t - L_t) \dif K_t ={} 0
\]
is satisfied, thereby establishing a connection with optimal stopping problems and obstacle problems for parabolic partial differential equations (PDEs). Since then, reflected BSDEs have been extended in several directions. Multi-dimensional RBSDEs, oblique reflection, and their links with optimal switching problems have been studied in \cite{tang_2010,hamadène_2010,tang_2011,hu_2015,richou_2020,ourkiya_2023,shi_2024}. Penalization methods and discrete-time approximations for reflected or obliquely reflected BSDEs have been studied in \cite{chassagneux_2012,chassagneux_2019}. Quadratic reflected BSDEs and their PDE obstacle problems have been studied in \cite{cao_2020,huang_2020,luo_2024}. 

In recent years, mean-field stochastic differential equations, also known as McKean--Vlasov equations, have been studied extensively due to their applications in mean-field games and interacting particle systems. The mean-field BSDE (MF-BSDE), where the driver depends on the law of the solution $(Y, Z)$, was systematically studied by Buckdahn et al. \cite{buckdahn_2009}. Subsequent research has extended this framework to more general distributional dependence and weaker regularity assumptions \cite{li_2018}, jump-diffusion settings and associated nonlocal integral-PDEs \cite{li_2018_meanf}, random terminal times \cite{li_2026}, fractional Brownian motion \cite{douissi_2019,fu_2025}, $G$-Brownian motion \cite{sun_2020_meanf,sun_2022}, and Gaussian-process-driven distribution-dependent BSDEs \cite{fan_2025}. Interacting particle approximations and propagation of chaos for backward McKean--Vlasov systems have been studied in \cite{laurière_2022,li_2024}.

The intersection of mean-field interactions and reflection has led to the active research area of mean-field reflected BSDEs. Li \cite{li_2014} studied reflected mean-field BSDEs and their links to nonlocal obstacle problems. Cui and Zhao \cite{cui_2025} relaxed the Lipschitz assumption, while Lin and Xu \cite{lin_2025_propa} established propagation of chaos for jump-driven versions. Niu et al. \cite{niu_2025} studied  multi-dimensional mean-reflected BSDEs in possibly non-convex domains, covering both inward normal and oblique reflection.

A distinct but closely related class of problems is BSDEs with mean reflection, where the constraint is imposed on the distribution of the solution rather than its pointwise paths. This type of constraint was introduced by Briand, Elie, and Hu \cite{briand2018bsdes}, who formulated the constraint in terms of the expectation of a loss function and established well-posedness under a deterministic Skorokhod condition. This framework has since been extended in several directions: Hibon et al. \cite{hibon2017quad} studied quadratic generators, and Li \cite{li2024backward} studied double nonlinear mean reflections. To study the limiting behavior and numerical approximations, interacting particle systems and forward-backward SDEs with normal constraints in law have been analyzed, establishing propagation of chaos and connections to PDEs on the Wasserstein space \cite{briand2020forward,briand2021particles}. Penalization schemes and their convergence have also been studied for mean-field reflected BSDEs where the obstacle depends on both the solution $Y$ and its distribution $\mathbb{P}_Y$, with applications to pricing life insurance contracts with surrender options \cite{chen2022mean, qian2023multi}.

Generalized BSDEs (GBSDEs) were introduced by Pardoux and Zhang \cite{pardoux1998generalized}. These equations incorporate an additional integral term with respect to a continuous increasing process $\kappa$, given by
\[
Y_t ={} \xi + \int_t^T f(s, Y_s, Z_s) \dif s + \int_t^T g(s, Y_s) \dif \kappa_s - \int_t^T Z_s \dif B_s,
\]
which is typically interpreted as the local time of a diffusion process at the boundary. This extension provides a probabilistic representation for solutions to semi-linear PDEs with non-linear Neumann boundary conditions \cite{elhachemy2023reflected}. Subsequent research has developed this theory to accommodate discontinuous coefficients, reflection constraints, and Poisson random measure jumps under various regularity conditions (see, e.g., \cite{elhachemy2023reflected, elmansouri_2026} and references therein).

In the mean-field context, the study of generalized BSDEs remains limited. Feng \cite{feng2021generalized} studied mean-field generalized BSDEs, where the driver depends on the law of the solution $(Y, Z)$, given by
\[
Y_t ={} \xi + \int_t^T f(s, Y_s, Z_s, \nu_s) \dif s + \int_t^T g(s, Y_s) \dif \kappa_s - \int_t^T Z_s \dif B_s,
\]
where $\nu_s$ denotes the law of $(Y_s, Z_s)$. Feng established existence and uniqueness results and showed that these equations provide a probabilistic interpretation for solutions to non-local PDEs with non-linear Neumann boundary conditions through the associated McKean--Vlasov forward equation. Elhachemy \cite{elhachemy_2025} extended this framework to include reflected generalized BSDEs with jumps, considering barriers that are right-continuous with left limits (RCLL) and allowing for stochastic Lipschitz coefficients.

In this paper, we introduce a new class of generalized mean-field BSDEs subject to a mean-reflection constraint. Specifically, we study mean-reflected BSDEs where the driver depends on the joint law of $(Y, Z)$ and an additional integral driven by a continuous non-decreasing process $\kappa$ is present. The equation is given by:
\[
Y_t ={} \xi + \int_t^T f(s, Y_s, Z_s, \nu_s) \dif s + \int_t^T g(s, Y_s) \dif \kappa_s - \int_t^T Z_s \dif B_s + K_T - K_t,
\]
where $\nu_s$ denotes the law of $(Y_s, Z_s)$, subject to the mean reflection constraint $\mathbb{E}[Y_t] \ge{} u_t$ and the Skorokhod condition $\int_0^T (\mathbb{E}[Y_t] - u_t) \dif K_t ={} 0$. Compared to the existing literature, our model incorporates a general driver $f$ depending on the joint law of $(Y, Z)$ and an additional integral driven by $\kappa$ with a coefficient $g$ satisfying a strict monotonicity condition.

We establish the well-posedness of this generalized mean-reflected MF-BSDE. Our contributions are as follows. First, we derive a priori estimates and stability estimates for the solutions under Lipschitz and monotonicity assumptions. Second, we prove the uniqueness of the solution based on these stability estimates. Finally, for existence, we use the penalization method. We construct a smooth approximation of the boundary $u$ and introduce a sequence of penalized equations without constraints. By establishing uniform estimates and the Cauchy property for the penalized solutions, we pass to the limit to obtain the existence of the solution to the original mean-reflected MF-BSDE.

\section{Preliminaries and Problem Formulation}\label{problem}

In this section, we introduce the mathematical notations and spaces used throughout this paper, and provide the mathematical formulation and the definition of the solution for the mean-reflected McKean--Vlasov BSDE. Finally, we state a series of basic assumptions upon which the results of this paper rely.

\subsection{Notations and Spaces}

Let $(\Omega,\mathcal{F},\mathbb{P})$ be a complete probability space on which a $d$-dimensional standard Brownian motion $B ={} (B_t)_{0\le{} t\le{} T}$ is defined. Denote by $\mathbb{F} ={} (\mathcal{F}_t)_{0\le{} t\le{} T}$ the natural filtration generated by this Brownian motion, augmented by all $\mathbb{P}$-null sets so that it satisfies the usual conditions (i.e., right-continuous and complete). Let $T>0$ be a given finite terminal time.

To formulate the equation and its solution, we introduce the following basic spaces of processes:
\begin{itemize}
\item $S^2$: the space of all $\mathbb{R}^n$-valued, continuous and $\mathbb{F}$-adapted stochastic processes $Y ={} (Y_t)_{0\le{} t\le{} T}$ such that
\[
\mathbb{E}\left[\sup_{0\le{} t\le{} T}|Y_t|^2\right]<\infty.
\]
\item $H^2$: the space of all $\mathbb{R}^{n\times d}$-valued, $\mathbb{F}$-predictable stochastic processes $Z ={} (Z_t)_{0\le{} t\le{} T}$ such that
\[
\mathbb{E}\left[\int_0^T|Z_t|^2\dif t\right]<\infty.
\]
\item $A_d^2$: the space of all deterministic, continuous and non-decreasing functions $K ={} (K_t)_{0\le{} t\le{} T}$ such that $K_0={}0$ and $K_T<\infty$.
\end{itemize}

Since our equation involves distribution dependence (i.e., McKean--Vlasov type dependence), we also need to define relevant measure spaces. Let $\mathcal{P}_2(\mathbb{R}^n\times\mathbb{R}^{n\times d})$ denote the space of all probability measures on $\mathbb{R}^n\times\mathbb{R}^{n\times d}$ with finite second moments. The space is naturally equipped with the 2-Wasserstein distance, defined as: for any $\nu_1,\nu_2\in\mathcal{P}_2(\mathbb{R}^n\times\mathbb{R}^{n\times d})$,
\[
W_2(\nu_1,\nu_2):={}\left(\inf_{\pi\in\Pi(\nu_1,\nu_2)}\int_{\mathbb{R}^n\times\mathbb{R}^{n\times d}}|x-y|^2\dif\pi(x,y)\right)^{1/2},
\]
where $\Pi(\nu_1,\nu_2)$ denotes the set of all joint probability measures with marginal distributions $\nu_1$ and $\nu_2$, respectively.

\subsection{Problem Formulation and Definition of the Solution}

This paper aims to study the following mean-reflected BSDE with distribution dependence: for $0\le{} t\le{} T$,
\begin{equation}\label{equation}
\begin{aligned}
& Y_t ={} \xi + \int_t^T f(s,Y_s,Z_s,\nu_s)\dif s + \int_t^T g(s,Y_s)\dif\kappa_s - \int_t^T Z_s\dif B_s + K_T - K_t,\\
& \mathbb{E}[Y_t] \ge{} u_t,\quad \int_0^T (\mathbb{E}[Y_t]-u_t)\dif K_t ={} 0,
\end{aligned}
\end{equation}
where the process $\nu_s$ represents the joint distribution of the random vector $(Y_s,Z_s)$ under the probability measure $\mathbb{P}$ at time $s$, i.e., $\nu_s :={} \mathcal{L}[(Y_s,Z_s)]$. In the above equation, $K$ is called the reflection process (or compensator process), which is introduced to keep the expectation of the process $Y_t$ always above a given obstacle boundary $u_t$. Meanwhile, the flatness (Skorokhod) condition $\int_0^T (\mathbb{E}[Y_t]-u_t)\dif K_t={}0$ ensures that this compensator acts with minimal effort, meaning that $K$ strictly increases only when $\mathbb{E}[Y_t]={}u_t$.

\begin{definition}[Solution of the Equation]\label{def:solution}
We say that a triplet $(Y,Z,K)$ is a solution to equation \eqref{equation} if:
\begin{enumerate}
\item $(Y,Z,K)\in S^2\times H^2\times A_d^2$;
\item For $\mathbb{P}$-almost all $\omega\in\Omega$, equation \eqref{equation} holds on $0\le{} t\le{} T$.
\end{enumerate}
\end{definition}

In the subsequent sections, we will focus on answering the following questions: does the solution $(Y,Z,K)$ to equation \eqref{equation} exist, and is it unique? Furthermore, we will establish the corresponding a priori estimates and stability theory.

\subsection{Basic Assumptions}\label{assumption}

To prove the existence and uniqueness of the solution to equation \eqref{equation}, we need to impose certain mathematical assumptions on the generators $f$ and $g$, the terminal condition $\xi$, the obstacle boundary $u$, and the driving process $\kappa$. All theoretical derivations in the remainder of this paper are based on the following assumptions:

\begingroup
\begin{enumerate}
\renewcommand{\labelenumi}{(A\arabic{enumi})}
\item The mapping $f:\Omega\times[0,T]\times\mathbb{R}^n\times\mathbb{R}^{n\times d}\times\mathcal{P}_2(\mathbb{R}^n\times\mathbb{R}^{n\times d})\to\mathbb{R}^n$ satisfies the following conditions:
\begin{enumerate}
\item The process $(f(t,0,0,\delta_0))_{0\le{} t\le{} T}$ is $\mathbb{F}$-progressively measurable, and for any $\mu>0$:
\[
\mathbb{E}\left[\int_0^T e^{\mu\kappa_t}|f(t,0,0,\delta_0)|^2\dif t\right]<\infty,
\]
where $\delta_0$ denotes the Dirac measure concentrated at the origin in $\mathbb{R}^n\times\mathbb{R}^{n\times d}$.
\item There exists a constant $L_f\ge{}0$ such that for all $(t,y_i,z_i,\nu_i)\in[0,T]\times\mathbb{R}^n\times\mathbb{R}^{n\times d}\times\mathcal{P}_2(\mathbb{R}^n\times\mathbb{R}^{n\times d})$ for $i={}1,2$, it holds that:
\[
|f(t,y_1,z_1,\nu_1)-f(t,y_2,z_2,\nu_2)|\le{} L_f\bigl(|y_1-y_2|+|z_1-z_2|+W_2(\nu_1,\nu_2)\bigr).
\]
\end{enumerate}
\item The mapping $g:\Omega\times[0,T]\times\mathbb{R}\to\mathbb{R}$ is $\mathbb{F}$-progressively measurable, and for all $t\le{} T$, $y\mapsto g(t,y)$ is continuous. There exists a constant $\beta<0$ such that for all $t\in[0,T]$ and $y_1,y_2\in\mathbb{R}$:
\[
(y_1-y_2)(g(t,y_1)-g(t,y_2))\le{}\beta(y_1-y_2)^2.
\]
Furthermore, there exist an $\mathbb{F}$-adapted process $(\psi_t)_{0\le{} t\le{} T}$ with values in $[0,+\infty)$ and a constant $L_g>0$ such that for all $\mu>0$, $t\in[0,T]$, and $y\in\mathbb{R}$:
\[
|g(t,y)|\le{}\psi_t+L_g|y|\quad\text{and}\quad\mathbb{E}\left[\int_0^T e^{\mu\kappa_t}|\psi_t|^2\dif\kappa_t\right]<\infty.
\]
\item The terminal value $\xi$ is an $\mathcal{F}_T$-measurable random variable satisfying $\mathbb{E}[e^{\mu\kappa_T}|\xi|^2]<\infty$ for any $\mu>0$, and $\mathbb{E}[\xi]\ge{} u_T$.
\item The mapping $u:[0,T]\to\mathbb{R}$ is a deterministic continuous process.
\item The mapping $\kappa:[0,T]\to\mathbb{R}$ is an $\mathbb{F}$-adapted, continuous, and non-decreasing process with $\kappa_0={}0$, and $\mathbb{E}[e^{\mu\kappa_T}]<\infty$ for any $\mu>0$.
\end{enumerate}
\endgroup

Based on the above assumptions, we are able to establish the core theoretical results of this paper. The following theorem states the existence and uniqueness of the solution to equation \eqref{equation}.

\begin{theorem}[Existence and Uniqueness]\label{thm:existence_uniqueness}
Under Assumptions (A1)--(A5), the mean-reflected McKean--Vlasov backward stochastic differential equation \eqref{equation} admits a unique solution $(Y,Z,K)$ in the space $S^2\times H^2\times A_d^2$.
\end{theorem}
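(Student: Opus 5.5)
We plan to prove uniqueness through a stability estimate and existence by penalization, as the introduction announces. All estimates will use the weighted quantity $e^{\mu\kappa_t+\lambda t}|Y_t|^2$ with $\mu,\lambda>0$ chosen large. The $\mu$-weight handles the $\dif\kappa$ integral: the monotonicity $\beta<0$ in (A2) gives a good-sign term $2\beta\int e^{\mu\kappa_s+\lambda s}|\delta Y_s|^2\dif\kappa_s$. The $\lambda$-weight absorbs the Lipschitz constant of $f$. For the measure argument we use $W_2(\nu^1_s,\nu^2_s)^2\le \EE[|\delta Y_s|^2+|\delta Z_s|^2]$, which after taking expectations is controlled exactly like the pathwise terms.

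\textbf{Step 1 (a priori and stability estimates).} Let $(Y^i,Z^i,K^i)$, $i=1,2$, be two solutions with data $(\xi^i,f^i,g^i,u^i)$. Apply It\^o's formula to $e^{\mu\kappa_t+\lambda t}|\delta Y_t|^2$ and take expectations. The key observation concerns the reflection term $2\int_t^T e^{\mu\kappa_s+\lambda s}\delta Y_s\dif(\delta K_s)$. Since $K^i$ is deterministic, we have $\EE[e^{\mu\kappa_s}\delta Y_s]$ in place of $\EE[\delta Y_s]$, and this mismatch must be treated.

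The treatment I would try first is to keep the weight in $\kappa$ only through separate estimates. Concretely, we first derive the estimate with weight $e^{\lambda t}$ alone, where
\[
\EE\Bigl[\int_t^T e^{\lambda s}\delta Y_s\dif(\delta K_s)\Bigr]=\int_t^T e^{\lambda s}\EE[\delta Y_s]\dif(\delta K_s).
\]
By the Skorokhod conditions and $\EE[Y^i]\ge u$, this is at most
\[
\int_t^T e^{\lambda s}(u^1_s-u^2_s)\dif(K^1_s-K^2_s)\le \sup_s|\delta u_s|\,(K^1_T+K^2_T).
\]
Following \cite{briand2018bsdes}, $K_T$ is bounded by taking expectations in the equation: $K_T-K_t=\sup_{s\ge t}(u_s-\EE[\ldots])^+$-type representations give $K_T\le C(\text{data})$.

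The $\dif\kappa$ term retains its good sign without the $\mu$-weight, because $\beta<0$. The estimate $\EE\int|\psi|^2\dif\kappa<\infty$ is needed only for the a priori bounds, and there we use the $e^{\mu\kappa}$ weights with $K$ bounded separately. Sup-norm control then follows from BDG. Setting identical data makes the right-hand side vanish, which yields $\delta Y=\delta Z=0$ and then $\delta K=0$ from the equation, so uniqueness follows.

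\textbf{Step 2 (penalization).} Approximate $u$ uniformly by smooth $u^m$ with $u^m_T\le\EE\xi$. For each $n$, solve the mean-field generalized BSDE
\[
Y^n_t=\xi+\int_t^T f(s,Y^n_s,Z^n_s,\nu^n_s)\dif s+\int_t^T g(s,Y^n_s)\dif\kappa_s+n\int_t^T(u_s-\EE Y^n_s)^+\dif s-\int_t^T Z^n_s\dif B_s .
\]
The driver is Lipschitz in the law, so this equation is well-posed by a Picard contraction in the weighted norm. The contraction follows from the Step 1 computation without reflection, and the $g$ part is handled as in \cite{pardoux1998generalized}. Set $K^n_t=n\int_0^t(u_s-\EE Y^n_s)^+\dif s$; this process is deterministic, continuous and non-decreasing.

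\textbf{Step 3 (uniform bounds and Cauchy property).} This is the main obstacle. We need $\sup_n K^n_T<\infty$ and $\sup_t(u_t-\EE Y^n_t)^+\to0$. Taking expectations, $y^n_t=\EE Y^n_t$ satisfies a deterministic ODE with a perturbation $\EE[f+g\dot\kappa]$. A comparison argument for this ODE, using smoothness of $u^m$ (so that $\dot u^m$ is bounded), gives $(u^m-y^n)^+\le C/n$ and $K^n_T\le C$.

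Next, apply the Step 1 estimate to $(Y^n,Y^p)$. The cross terms are bounded by
\[
\int(\EE Y^n-\EE Y^p)\dif(K^n-K^p)\le C\Bigl(\sup_t(u-\EE Y^n)^+ +\sup_t(u-\EE Y^p)^+\Bigr),
\]
and the right-hand side tends to $0$. Hence $(Y^n,Z^n)$ is Cauchy in $S^2\times H^2$. Then $K^n$ converges uniformly from the equation, and the Skorokhod condition passes to the limit. A final diagonal argument in $m$, using the stability in $u$ from Step 1, removes the smoothing of $u$.
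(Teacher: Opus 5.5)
Your Step 1 and the uniqueness conclusion are essentially the paper's Lemma~\ref{lem:stability} and Proposition~\ref{prop:uniqueness}: weight $e^{\lambda t}$ only, the $\dif\kappa$-term discarded via $\beta<0$, the reflection cross term reduced by the Skorokhod conditions to $\int_t^T e^{\lambda s}(u^1_s-u^2_s)\dif(K^1_s-K^2_s)$, and $\delta K=0$ read off the equation. The two-obstacle bound is exactly what the paper uses in Proposition~\ref{prop:exist_u} to remove the smoothing.

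For existence your route differs. The paper penalizes on the clock $\dif(s+\EE[\kappa_s])$. It gets $n$-uniform bounds by applying It\^o's formula to $|Y^{n,k}-u^k|^2$, so that the penalty term has a sign; this is why it needs $u^k\in C^1$. An energy identity for $|(\EE[Y^{n,k}]-u^k)^-|^2$ then gives the rates $\sup_t|y^-_t|^2\le C_k/n$ and $\int_0^T|y^-_t|^2\dif(t+\EE[\kappa_t])\le C_k/n^2$, hence a quantitative Cauchy bound $C_k(m^{-1/2}+n^{-1/2})$. You penalize in $\dif s$, treat $\EE[Y^n]$ as a scalar penalized Skorokhod problem, and bound the cross terms in the El Karoui et al.\ way by $K^p_T\sup_t(u_t-\EE Y^n_t)^++K^n_T\sup_t(u_t-\EE Y^p_t)^+$. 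This needs only qualitative convergence and gives no rate. In return it works for merely continuous $u$ and arbitrary continuous $\kappa$, so your smoothing and final diagonal step are actually dispensable. Your requirement $u^m_T\le\EE\xi$ is also to the point: the paper's mollification does not guarantee $u^k_T\le\EE\xi$, yet Lemma~\ref{lem:error_estimate_penal} uses $y^-_T=0$.

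One step must be repaired. Your claim in Step 3 that a comparison argument gives $(u^m-\EE Y^n)^+\le C/n$ is false in general, for two reasons. First, $\kappa$ is only continuous, so there is no $\dot\kappa$. The forcing $A\mapsto\EE\int_A g(s,Y^n_s)\dif\kappa_s$ may be singular with respect to $\dif s$ (e.g.\ a Cantor-type $\kappa$), and a penalty acting at rate $n\,\dif s$ cannot offset it at rate $1/n$. Second, even the $\dif s$-forcing $\EE f(s,Y^n_s,Z^n_s,\nu^n_s)$ is only in $L^2(\dif s)$, whatever the bound on $\dot u^m$.

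You only use $\sup_t(u_t-\EE Y^n_t)^+\to0$, and that does hold. Write $u_t-\EE Y^n_t=F^n_t-(K^n_T-K^n_t)$ with
$F^n_t:=u_t-\EE\xi-\int_t^T\EE f(s,Y^n_s,Z^n_s,\nu^n_s)\dif s-\EE\int_t^Tg(s,Y^n_s)\dif\kappa_s$, so that $F^n_T\le0$.
\begin{itemize}
\item Looking at the first time the gap becomes positive gives $K^n_T\le\sup_t(F^n_t)^+$.
\item If the gap exceeded $\varepsilon$ at some $t$, it would stay $\ge\varepsilon/2$ on some $[t,\tau]$ with $\tau-t\le 4\sup|F^n|/(n\varepsilon)$ and $\varepsilon/2<|F^n_t-F^n_\tau|$. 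This is impossible for large $n$ once $\{F^n\}$ is uniformly bounded and equicontinuous.
\end{itemize}

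Both properties need $n$-uniform moment bounds, which themselves depend on $K^n_T$, so you must close this loop explicitly:
\begin{itemize}
\item Drop the $e^{\mu\kappa}$ weight. With it, the reflection term involves $\EE[e^{\mu\kappa_s}Y^n_s]$ and loses its sign structure.
\item Use $\EE[Y^n_s]\dif K^n_s\le u_s\dif K^n_s$ to bound the reflection term by $e^{\lambda T}\|u\|_\infty K^n_T$.
\item Set $A_n:=\sup_t\EE|Y^n_t|^2+\EE\int_0^T|Z^n_s|^2\dif s+\EE\int_0^T|Y^n_s|^2\dif\kappa_s$; the last term sits on the left thanks to $\beta<0$. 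The a priori estimate gives $A_n\le C+CK^n_T$, and $K^n_T\le C(1+A_n^{1/2})$, so Young's inequality closes.
\end{itemize}
Equicontinuity then follows from Cauchy--Schwarz, e.g.\ $\EE\int_t^\tau|Y^n_s|\dif\kappa_s\le A_n^{1/2}(\EE\kappa_\tau-\EE\kappa_t)^{1/2}$, together with continuity of $t\mapsto\EE\kappa_t$.
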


To prove Theorem \ref{thm:existence_uniqueness}, first, we will establish a priori estimates and stability inequalities for the solution, from which the uniqueness of the solution will be directly derived. Second, considering that the given obstacle boundary $u$ is only continuous and may lack differentiability, we will employ a smooth approximation combined with the penalization method to construct a sequence of unconstrained generalized BSDEs with penalty terms. Finally, by establishing uniform a priori bounds and approximation error estimates for the penalized sequence, we will prove that it forms a Cauchy sequence in the space $S^2\times H^2$, and the solution to the original reflected equation is then constructed by passing to the limit.

\section{A Priori Estimates}\label{sec:priori}

When studying BSDEs with distribution dependence and reflecting boundary conditions, a priori estimates of the solution are fundamental for establishing stability theory, proving uniqueness, and constructing existence via approximation methods. In this section, we provide the upper bounds for the norms of the solution $(Y,Z,K)$ to equation \eqref{equation} in the space $S^2\times H^2\times A_d^2$. The estimates in this section show that under the given basic assumptions, as long as a solution exists, the norms of its process trajectories are strictly controlled by the system inputs (i.e., the terminal condition $\xi$, the value of the generator at the origin, and the terminal value of the reflection process $K_T$). We now state and prove this a priori estimate lemma.

\begin{lemma}[A Priori Estimates]\label{lem:priori}
Under Assumptions (A1)--(A4), if $(Y,Z,K)$ is a solution to the equation, then there exists a constant $C>0$ (depending on $L_f,\beta,T$) such that
\[
\begin{aligned}
&\mathbb{E}\left[\sup_{0\le{} t\le{} T}|Y_t|^2 + \int_0^T |Z_s|^2\dif s\right] \\
\le{}& C\left(\mathbb{E}|\xi|^2 + \mathbb{E}\left[\int_0^T|f(s,0,0,\delta_0)|^2\dif s\right] + \mathbb{E}\left[\int_0^T|\psi_s|^2\dif\kappa_s\right] + K_T^2\right).
\end{aligned}
\]
\end{lemma}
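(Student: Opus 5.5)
We apply It\^o's formula to $e^{\alpha t}|Y_t|^2$ on $[t,T]$ for a constant $\alpha>0$ to be fixed later. This gives
\[
\begin{aligned}
&e^{\alpha t}|Y_t|^2+\int_t^T e^{\alpha s}\bigl(\alpha|Y_s|^2+|Z_s|^2\bigr)\dif s
= e^{\alpha T}|\xi|^2+2\int_t^T e^{\alpha s}\langle Y_s,f(s,Y_s,Z_s,\nu_s)\rangle\dif s\\
&\quad+2\int_t^T e^{\alpha s}\langle Y_s,g(s,Y_s)\rangle\dif\kappa_s+2\int_t^T e^{\alpha s}\langle Y_s,\dif K_s\rangle-2\int_t^T e^{\alpha s}\langle Y_s,Z_s\dif B_s\rangle .
\end{aligned}
\]
We bound each term on the right in turn.

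\emph{The $f$ term.} By (A1)(b),
\[
|f(s,Y_s,Z_s,\nu_s)|\le |f(s,0,0,\delta_0)|+L_f\bigl(|Y_s|+|Z_s|+W_2(\nu_s,\delta_0)\bigr),
\]
and $W_2(\nu_s,\delta_0)^2=\EE|Y_s|^2+\EE|Z_s|^2$. Young's inequality then gives
\[
2\langle Y,f\rangle\le C_\varepsilon|Y_s|^2+|f(s,0,0,\delta_0)|^2+\varepsilon\bigl(|Z_s|^2+\EE|Y_s|^2+\EE|Z_s|^2\bigr).
\]

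\emph{The $g$ term.} The monotonicity in (A2) with $y_2=0$ gives
\[
y\,g(s,y)\le \beta y^2+|y|\,|g(s,0)|\le \beta y^2+|y|\psi_s\le \tfrac{\beta}{2}y^2+\tfrac{1}{2|\beta|}\psi_s^2 .
\]
Since $\beta<0$, the $\dif\kappa$ integral is bounded by $C\int_t^T e^{\alpha s}\psi_s^2\dif\kappa_s$. The negative part can be discarded, so no exponential moments of $\kappa$ are needed for this estimate.

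\emph{The reflection term.} Because $K$ is deterministic and non-decreasing, $2\int_t^T e^{\alpha s}Y_s\dif K_s\le 2e^{\alpha T}\sup_s|Y_s|\,K_T$. After taking expectations, this is at most $\delta\,\EE\sup_s|Y_s|^2+\delta^{-1}C K_T^2$.

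Next we take expectations, noting that the stochastic integral is a true martingale thanks to BDG and $Y\in S^2$, $Z\in H^2$. The mean-field terms $\EE|Y_s|^2$ and $\EE|Z_s|^2$ are absorbed by the left-hand side: we choose $\varepsilon$ small, say $\varepsilon\le 1/4$, and then $\alpha$ large enough to dominate $C_\varepsilon+\varepsilon$. This yields
\[
\sup_t\EE|Y_t|^2+\EE\int_0^T|Z_s|^2\dif s\le C\Bigl(\EE|\xi|^2+\EE\int_0^T|f(s,0,0,\delta_0)|^2\dif s+\EE\int_0^T\psi_s^2\dif\kappa_s+K_T^2\Bigr)+C\delta\,\EE\sup_s|Y_s|^2 .
\]

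To upgrade to $\EE\sup_t|Y_t|^2$, we return to the pathwise It\^o identity, take $\sup_t$ before expectation, and control the martingale term with BDG:
\[
\EE\sup_t\Bigl|\int_t^T e^{\alpha s}\langle Y_s,Z_s\dif B_s\rangle\Bigr|\le \tfrac14\EE\sup_s|Y_s|^2+C\,\EE\int_0^T|Z_s|^2\dif s .
\]
The remaining terms are bounded by the previous step. The mean-field contributions are deterministic, namely $\int_0^T(\EE|Y_s|^2+\EE|Z_s|^2)\dif s$, and are therefore already controlled.

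Finally, we choose $\delta$ small enough that the $\delta\,\EE\sup|Y|^2$ contributions are absorbed into the left-hand side. This closes the estimate with a constant depending only on $L_f$, $\beta$, $T$. The main care point is this absorption. The $\dif K$ term involves $\sup|Y|$ rather than $\EE Y$, because the Skorokhod condition only constrains $\EE[Y_t]$; indeed we do not use the Skorokhod condition here, since $K_T$ already appears on the right. Consequently $\delta$ must be fixed first and $\alpha$ chosen afterwards, with all constants tracked so that they do not depend on $\kappa$.
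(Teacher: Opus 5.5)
Your proof is correct and follows the same route as the paper: the weighted It\^o formula, Lipschitz bounds plus Young's inequality for $f$ (using $W_2(\nu_s,\delta_0)^2=\EE|Y_s|^2+\EE|Z_s|^2$), the monotonicity of $g$ with the negative part discarded, the pathwise bound $2e^{\alpha T}\sup_s|Y_s|\,K_T$ plus Young for the reflection term (the paper also computes a Skorokhod-based bound there but never uses it), and BDG for the martingale term. Your one organizational difference---first taking expectations at fixed $t$ to control $\EE\int_0^T|Z_s|^2\dif s$, and only then applying BDG to the pathwise identity---is the sound way to close the estimate: the paper tries to absorb the BDG remainder $\frac{C_1^2}{4\epsilon}\EE\int_0^T e^{\alpha s}|Z_s|^2\dif s$ into the left-hand side in a single pass, which fails because choosing $\epsilon$ small only makes that coefficient larger.
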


\begin{proof}
To derive the a priori estimates for the solution, we apply It\^o's formula with an exponential weight $e^{\alpha t}$, where $\alpha > 0$ is a constant to be determined. Expanding $e^{\alpha s} |Y_s|^2$ on the interval $[t, T]$, we obtain:
\begin{align}
& e^{\alpha t} |Y_t|^2 + \int_t^T e^{\alpha s} |Z_s|^2 \dif s + \int_t^T \alpha e^{\alpha s} |Y_s|^2 \dif s \notag \\
={}& e^{\alpha T} |\xi|^2 + 2\int_t^T e^{\alpha s} Y_s f(s,Y_s,Z_s,\nu_s) \dif s + 2\int_t^T e^{\alpha s} Y_s g(s,Y_s) \dif \kappa_s \notag \\
& + 2\int_t^T e^{\alpha s} Y_s \dif K_s - 2\int_t^T e^{\alpha s} Y_s Z_s \dif B_s. \label{ito1}
\end{align}

Next, we estimate each term on the right-hand side of the equation respectively. First, for the integral term associated with the generator $f$, combining the Lipschitz condition in Assumption~(A1) and Young's inequality, for any $s \in [t, T]$, we have:
\begin{align}
& 2 Y_s f(s, Y_s, Z_s, \nu_s) \notag\\
={} & 2 Y_s \bigl(f(s, Y_s, Z_s, \nu_s) - f(s, 0, 0, \delta_0)\bigr) + 2 Y_s f(s, 0, 0, \delta_0) \notag\\
\le{} & 2 L_f |Y_s| \bigl(|Y_s| + |Z_s| + W_2(\nu_s, \delta_0)\bigr) + 2 |Y_s| |f(s, 0, 0, \delta_0)| \notag\\
\le{} & 2 L_f |Y_s|^2 + \bigl(4 L_f^2 |Y_s|^2 + \tfrac14 |Z_s|^2\bigr) + \bigl(4 L_f^2 |Y_s|^2 + \tfrac14 W_2^2(\nu_s, \delta_0)\bigr) + \bigl(|Y_s|^2 + |f(s, 0, 0, \delta_0)|^2\bigr) \notag\\
\le{} & (2 L_f + 8 L_f^2 + 1) |Y_s|^2 + \tfrac14 |Z_s|^2 + \tfrac14 \EE|Y_s|^2 + \tfrac14 \EE|Z_s|^2 + |f(s, 0, 0, \delta_0)|^2 . \label{eq:est_f_derivation}
\end{align}

Substituting the above inequality into the corresponding integral term, we obtain:
\begin{align}
& 2 \int_t^T e^{\alpha s} Y_s f(s, Y_s, Z_s, \nu_s) \dif s \notag \\
\le{} & \int_t^T e^{\alpha s} (2 L_f + 8 L_f^2 + 1) |Y_s|^2 \dif s + \frac14 \int_t^T e^{\alpha s} \bigl( |Z_s|^2 + \EE|Z_s|^2 \bigr) \dif s \notag \\
& + \frac14 \int_t^T e^{\alpha s} \EE|Y_s|^2 \dif s + \int_t^T e^{\alpha s} |f(s, 0, 0, \delta_0)|^2 \dif s. \label{est_f}
\end{align}

Second, for the integral term involving the generator $g$, we utilize the monotonicity condition in Assumption~(A2). By taking $y_2 ={} 0$, we have $y\,(g(s,y)-g(s,0)) \le{} \beta |y|^2$. Since it is required that $\beta < 0$, applying Young's inequality to the cross term $2 Y_s g(s,0)$ yields $2 Y_s g(s,0) \le{} -2\beta |Y_s|^2 + C_\beta |g(s,0)|^2$, where $C_\beta ={} 1/(-2\beta) > 0$. Thus:
\begin{align*}
2 Y_s g(s,Y_s) & ={} 2 Y_s \bigl(g(s,Y_s)-g(s,0)\bigr) + 2 Y_s g(s,0) \\
& \le{} 2\beta |Y_s|^2 - 2\beta |Y_s|^2 + C_\beta |g(s,0)|^2 ={} C_\beta |g(s,0)|^2 .
\end{align*}
Multiplying both sides of this inequality by $e^{\alpha s}$ and integrating with respect to $\kappa_s$ over $[t,T]$, we get:
\[
2 \int_t^T e^{\alpha s} Y_s g(s,Y_s) \dif \kappa_s \le{} C_\beta \int_t^T e^{\alpha s} |g(s,0)|^2 \dif \kappa_s .
\]
Furthermore, from the linear growth condition $|g(t,y)| \le{} \psi_t + L_g |y|$ in Assumption~(A2), taking $y={}0$ implies $|g(s,0)| \le{} \psi_s$, which leads to:
\begin{equation}
2 \int_t^T e^{\alpha s} Y_s g(s,Y_s) \dif \kappa_s \le{} C_\beta \int_t^T e^{\alpha s} |\psi_s|^2 \dif \kappa_s . \label{est_g}
\end{equation}

Additionally, for the integral term related to the reflection process $K$, considering the flatness condition $\int_0^T (\EE[Y_s] - u_s) \dif K_s ={} 0$ and the constraint $\EE[Y_s] \ge{} u_s$. Since $K$ is a deterministic non-decreasing process, the support of the measure $\dif K_s$ is contained in the set $\{s\in[0,T]: \EE[Y_s]={}u_s\}$. Thus, taking the mathematical expectation yields the following property:
\begin{equation}
2\EE\!\left[\int_t^T e^{\alpha s} Y_s \dif K_s\right] ={} 2\int_t^T e^{\alpha s} \EE[Y_s] \dif K_s ={} 2\int_t^T e^{\alpha s} u_s \dif K_s \le{} 2 e^{\alpha T} \|u\|_{\infty} K_T . \label{est_k}
\end{equation}

However, to obtain estimates for the supremum of the trajectory of $Y$ and the time integral of the process $Z$, we need to directly take the supremum over $t\in[0,T]$ on both sides of formula~\eqref{ito1} before taking the expectation. Combining the estimates~\eqref{est_f} and~\eqref{est_g} and rearranging the terms, we have:
\begin{align}
& \EE\Bigl[\sup_{0\le{} t\le{} T} e^{\alpha t} |Y_t|^2\Bigr] + \EE\Bigl[\int_0^T e^{\alpha s} |Z_s|^2 \dif s\Bigr] + \EE\Bigl[\int_t^T e^{\alpha s} \bigl(\alpha - (2L_f+8L_f^2+5/4)\bigr) |Y_s|^2 \dif s\Bigr] \notag \\
\le{} & \EE\bigl[e^{\alpha T} |\xi|^2\bigr] + \frac14 \EE\Bigl[\int_0^T e^{\alpha s} \bigl(|Z_s|^2 + \EE|Z_s|^2\bigr) \dif s\Bigr] + \frac14 \EE\Bigl[\int_0^T e^{\alpha s} \EE|Y_s|^2 \dif s\Bigr] \notag \\
& + \EE\Bigl[\int_0^T e^{\alpha s} |f(s,0,0,\delta_0)|^2 \dif s\Bigr] + C_\beta \EE\Bigl[\int_0^T e^{\alpha s} |\psi_s|^2 \dif \kappa_s\Bigr] \notag \\
& + 2\EE\Bigl[\sup_{0\le{} t\le{} T} \int_t^T e^{\alpha s} Y_s \dif K_s\Bigr] + 2\EE\Bigl[\sup_{0\le{} t\le{} T} \Bigl|\int_t^T e^{\alpha s} Y_s Z_s \dif B_s\Bigr|\Bigr] . \label{eq:sup_inequality}
\end{align}

At this point, we choose $\alpha \ge{} 2L_f+8L_f^2+5/4$ sufficiently large so that the integral term containing $|Y_s|^2 \dif s$ on the left-hand side becomes non-negative and can be discarded. Then, for the stochastic integral term on the right-hand side, applying the Burkholder--Davis--Gundy (BDG) inequality, there exists a constant $C_1>0$ such that:
\begin{align}
& 2 \EE\Bigl[ \sup_{0\le{} t\le{} T} \Bigl| \int_t^T e^{\alpha s} Y_s Z_s \dif B_s \Bigr| \Bigr] \notag \\
\le{} & 4 \EE\Bigl[ \sup_{0\le{} t\le{} T} \Bigl| \int_0^t e^{\alpha s} Y_s Z_s \dif B_s \Bigr| \Bigr] \notag \\
\le{} & C_1 \EE\Bigl[ \Bigl( \int_0^T e^{2\alpha s} |Y_s|^2 |Z_s|^2 \dif s \Bigr)^{1/2} \Bigr] \notag \\
\le{} & C_1 \EE\Bigl[ \sup_{0\le{} s\le{} T} \bigl( e^{(\alpha s)/2} |Y_s| \bigr) \Bigl( \int_0^T e^{\alpha s} |Z_s|^2 \dif s \Bigr)^{1/2} \Bigr] . \label{eq:bdg_estimate}
\end{align}
By means of Young's inequality, for any $\epsilon > 0$, it can be further bounded by:
\[
2 \EE\Bigl[ \sup_{0\le{} t\le{} T} \Bigl| \int_t^T e^{\alpha s} Y_s Z_s \dif B_s \Bigr| \Bigr]
\le{} \epsilon \EE\Bigl[ \sup_{0\le{} t\le{} T} e^{\alpha t} |Y_t|^2 \Bigr] + \frac{C_1^2}{4\epsilon} \EE\Bigl[ \int_0^T e^{\alpha s} |Z_s|^2 \dif s \Bigr] .
\]

Similarly, for the supremum integral term related to $\dif K_s$, considering that $K$ is a non-decreasing process, pathwise estimation yields:
\begin{align}
& 2 \EE\Bigl[ \sup_{0\le{} t\le{} T} \int_t^T e^{\alpha s} Y_s \dif K_s \Bigr] \notag \\
& \le{} 2 \EE\Bigl[ \sup_{0\le{} t\le{} T} \bigl( e^{(\alpha t)/2} |Y_t| \bigr) e^{(\alpha T)/2} K_T \Bigr] \notag \\
& \le{} \epsilon \EE\Bigl[ \sup_{0\le{} t\le{} T} e^{\alpha t} |Y_t|^2 \Bigr] + \frac{1}{\epsilon} e^{\alpha T} K_T^2 . \label{eq:K_estimate}
\end{align}

Finally, substituting these two estimates back into the supremum inequality and choosing a sufficiently small $\epsilon$ (e.g., $\epsilon ={} 1/4$), the terms containing $\sup_{0\le{} t\le{} T} e^{\alpha t} |Y_t|^2$ and $\EE[\int_0^T e^{\alpha s} |Z_s|^2 \dif s]$ on the right-hand side can be moved to the left-hand side and absorbed. Since $\alpha$ has been fixed, all weight factors involving $e^{\alpha t}$ can be bounded by constants depending only on $T$ and $\alpha$. Consequently, we can deduce that there exists a constant $C > 0$ depending on $L_f, \beta, T$ such that:
\begin{align}\label{eq:final_estimate}
& \EE\Bigl[ \sup_{0\le{} t\le{} T} |Y_t|^2 + \int_0^T |Z_s|^2 \dif s \Bigr] \notag \\
\le{} & C \left( \EE|\xi|^2 + \EE\Bigl[\int_0^T |f(s,0,0,\delta_0)|^2 \dif s\Bigr] + \EE\Bigl[\int_0^T |\psi_s|^2 \dif \kappa_s\Bigr] + K_T^2 \right) .
\end{align}
This completes the proof of the a priori estimate lemma.
\end{proof}

\section{Stability Estimates}\label{sec:stability}

This section provides stability results for the solution of equation \eqref{equation} with respect to the terminal conditions and the generators.

\begin{lemma}[Stability Estimates]\label{lem:stability}
Under Assumptions (A1)--(A4), let $(Y^i,Z^i,K^i)$ be the solutions to equation \eqref{equation} corresponding to the terminal conditions $\xi^i$ and generators $f^i$, for $i={}1,2$. Then there exists a constant $C>0$ (depending only on $L_f$, $\beta$, $T$) such that
\[
\sup_{0\le{} t\le{} T}\mathbb{E}|\delta Y_t|^2 + \mathbb{E}\left[\int_0^T|\delta Z_s|^2\dif s\right] \le{} C\left(\mathbb{E}|\delta\xi|^2 + \mathbb{E}\left[\int_0^T|\delta f(s,Y_s^2,Z_s^2,\nu_s^2)|^2\dif s\right]\right),
\]
where the difference processes and terms are denoted by $\delta Y_t :={} Y_t^1-Y_t^2$, $\delta Z_t :={} Z_t^1-Z_t^2$, $\delta K_t :={} K_t^1-K_t^2$, $\delta\xi :={} \xi^1-\xi^2$, and the difference between the generators is $\delta f(t,y,z,\nu) :={} f^1(t,y,z,\nu)-f^2(t,y,z,\nu)$.
\end{lemma}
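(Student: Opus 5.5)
Apply It\^o's formula to $e^{\alpha s}|\delta Y_s|^2$ on $[t,T]$, where $\alpha>0$ will be chosen later. The difference satisfies
\[
\delta Y_t=\delta\xi+\int_t^T\bigl(f^1(s,Y^1_s,Z^1_s,\nu^1_s)-f^2(s,Y^2_s,Z^2_s,\nu^2_s)\bigr)\dif s+\int_t^T\bigl(g(s,Y^1_s)-g(s,Y^2_s)\bigr)\dif\kappa_s-\int_t^T\delta Z_s\dif B_s+\delta K_T-\delta K_t .
\]
Since the statement only controls $\sup_t\EE|\delta Y_t|^2$ rather than $\EE\sup_t$, I will take expectations directly, with no BDG step. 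The stochastic integral is then a true martingale, because $Y^i\in S^2$ and $Z^i\in H^2$.

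The three driver contributions are handled as follows.
\begin{itemize}
\item Split the $f$-difference as
\[
f^1(s,Y^1_s,Z^1_s,\nu^1_s)-f^1(s,Y^2_s,Z^2_s,\nu^2_s)+\delta f(s,Y^2_s,Z^2_s,\nu^2_s).
\]
By (A1) the first piece is bounded by $L_f(|\delta Y_s|+|\delta Z_s|+W_2(\nu^1_s,\nu^2_s))$.
\item Use the synchronous coupling bound $W_2^2(\nu^1_s,\nu^2_s)\le \EE|\delta Y_s|^2+\EE|\delta Z_s|^2$. Young's inequality then gives
\[
2\delta Y_s(\cdots)\le C|\delta Y_s|^2+\tfrac14|\delta Z_s|^2+\tfrac14\EE|\delta Z_s|^2+\tfrac14\EE|\delta Y_s|^2+|\delta f(s,Y^2_s,Z^2_s,\nu^2_s)|^2 .
\]
\item The $g$-term is $2\int_t^T e^{\alpha s}\delta Y_s(g(s,Y^1_s)-g(s,Y^2_s))\dif\kappa_s\le 2\beta\int_t^T e^{\alpha s}|\delta Y_s|^2\dif\kappa_s\le0$ by (A2) with $\beta<0$, so it can be dropped.
\end{itemize}

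The key step, and the main obstacle, is the reflection term $2\EE\int_t^T e^{\alpha s}\delta Y_s\dif\delta K_s$. Because $K^1,K^2$ are deterministic, this equals $2\int_t^T e^{\alpha s}(\EE Y^1_s-\EE Y^2_s)\dif(K^1_s-K^2_s)$. I expand it into four pieces using the Skorokhod conditions.
\begin{itemize}
\item On the support of $\dif K^1$ we have $\EE Y^1_s=u_s$, so $(\EE Y^1_s-\EE Y^2_s)\dif K^1_s=(u_s-\EE Y^2_s)\dif K^1_s\le0$ by the constraint $\EE Y^2_s\ge u_s$.
\item Symmetrically, $-(\EE Y^1_s-\EE Y^2_s)\dif K^2_s=-(\EE Y^1_s-u_s)\dif K^2_s\le0$.
\end{itemize}
Hence the whole reflection term is nonpositive and can be discarded. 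This is the classical mean-reflection argument of Briand--Elie--Hu. It is exactly why the estimate involves $\sup_t\EE$ rather than $\EE\sup_t$: the pathwise quantity $\int\delta Y\dif\delta K$ has no sign.

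Collecting terms, for every $t$,
\[
e^{\alpha t}\EE|\delta Y_t|^2+\tfrac12\EE\!\int_t^Te^{\alpha s}|\delta Z_s|^2\dif s+(\alpha-C-\tfrac14)\EE\!\int_t^Te^{\alpha s}|\delta Y_s|^2\dif s\le e^{\alpha T}\EE|\delta\xi|^2+\EE\!\int_t^Te^{\alpha s}|\delta f(s,Y^2_s,Z^2_s,\nu^2_s)|^2\dif s .
\]
Here the $\tfrac14|\delta Z|^2+\tfrac14\EE|\delta Z|^2$ terms become $\tfrac12\EE|\delta Z|^2$ after expectation and are absorbed on the left. Choosing $\alpha\ge C+\tfrac14$ removes the $\dif s$ term in $\delta Y$.

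Taking $\sup_t$ for the first term and $t=0$ for the $Z$ term, then bounding $e^{\alpha s}$ by $1$ and $e^{\alpha T}$, gives the claimed inequality with $C$ depending on $L_f$ and $T$ (and on $\beta$ only harmlessly). If the coupling step for $W_2$ seems delicate, I would note explicitly that $(\delta Y_s,\delta Z_s)$ comes from the joint law of $((Y^1_s,Z^1_s),(Y^2_s,Z^2_s))$, which is an admissible coupling in $\Pi(\nu^1_s,\nu^2_s)$.
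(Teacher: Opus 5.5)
Your proof is correct and follows essentially the same route as the paper: It\^o's formula for $e^{\alpha s}|\delta Y_s|^2$, the Lipschitz/Young estimate with the coupling bound $W_2^2(\nu^1_s,\nu^2_s)\le \EE|\delta Y_s|^2+\EE|\delta Z_s|^2$, dropping the $g$-term by strict monotonicity, and showing the reflection cross-terms are nonpositive via the Skorokhod conditions and the constraints $\EE[Y^i_s]\ge u_s$. Your remark that no BDG step is needed is also right, since the statement only controls $\sup_t\EE|\delta Y_t|^2$; the paper's closing appeal to BDG is superfluous for this estimate.
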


\begin{proof}
Applying Itô's formula to the squared difference term with an exponential weight $e^{\alpha t} |\delta Y_t|^2$ on the interval $[t,T]$, we obtain:
\begin{equation}
\begin{aligned}
&e^{\alpha t} |\delta Y_t|^2 + \int_t^T e^{\alpha s} |\delta Z_s|^2 \dif s + \int_t^T \alpha e^{\alpha s} |\delta Y_s|^2 \dif s \\
={}& e^{\alpha T} |\delta \xi|^2 + 2 \int_t^T e^{\alpha s} \delta Y_s \bigl( f^1(s, Y_s^1, Z_s^1, \nu_s^1) - f^2(s, Y_s^2, Z_s^2, \nu_s^2) \bigr) \dif s \\
&\quad + 2 \int_t^T e^{\alpha s} \delta Y_s \bigl( g(s, Y_s^1) - g(s, Y_s^2) \bigr) \dif \kappa_s + 2 \int_t^T e^{\alpha s} \delta Y_s \dif \delta K_s \\
&\quad - 2 \int_t^T e^{\alpha s} \delta Y_s \delta Z_s \dif B_s .
\end{aligned}
\label{ito_diff}
\end{equation}

Next, we estimate each term on the right-hand side of equation \eqref{ito_diff} individually. First, for the integral term arising from the difference in the generator $f$, using the Lipschitz condition in Assumption (A1) and Young's inequality, we have:
\begin{align*}
& 2 \delta Y_s \bigl( f^1(s, Y_s^1, Z_s^1, \nu_s^1) - f^2(s, Y_s^2, Z_s^2, \nu_s^2) \bigr) \\
={}& 2 \delta Y_s \bigl( f^1(s, Y_s^1, Z_s^1, \nu_s^1) - f^1(s, Y_s^2, Z_s^2, \nu_s^2) \bigr) + 2 \delta Y_s \delta f(s, Y_s^2, Z_s^2, \nu_s^2) \\
\le{}& 2 L_f |\delta Y_s| \bigl( |\delta Y_s| + |\delta Z_s| + W_2(\nu_s^1, \nu_s^2) \bigr) + 2 |\delta Y_s| \, |\delta f(s, Y_s^2, Z_s^2, \nu_s^2)| \\
\le{}& (2 L_f + 8 L_f^2 + 1) |\delta Y_s|^2 + \frac14 |\delta Z_s|^2 + \frac14 \EE|\delta Z_s|^2 + \frac14 \EE|\delta Y_s|^2 + |\delta f(s, Y_s^2, Z_s^2, \nu_s^2)|^2 .
\end{align*}
In the last step of the above estimation, we used the property of the 2-Wasserstein distance, namely $W_2(\nu_s^1, \nu_s^2) \le{} (\EE|\delta Y_s|^2 + \EE|\delta Z_s|^2)^{1/2}$. Multiplying this inequality by $e^{\alpha s}$ and integrating over $[t,T]$, we obtain:
\begin{equation}\label{est_f_diff}
\begin{aligned}
& 2 \int_t^T e^{\alpha s} \delta Y_s \bigl( f^1(s, Y_s^1, Z_s^1, \nu_s^1) - f^2(s, Y_s^2, Z_s^2, \nu_s^2) \bigr) \dif s \\
\le{}& \int_t^T e^{\alpha s} (2 L_f + 8 L_f^2 + 1) |\delta Y_s|^2 \dif s + \frac14 \int_t^T e^{\alpha s} \bigl( |\delta Z_s|^2 + \EE|\delta Z_s|^2 \bigr) \dif s \\
&\quad + \frac14 \int_t^T e^{\alpha s} \EE|\delta Y_s|^2 \dif s + \int_t^T e^{\alpha s} |\delta f(s, Y_s^2, Z_s^2, \nu_s^2)|^2 \dif s . 
\end{aligned} 
\end{equation}

Second, for the difference term caused by the function $g$, according to the strictly monotonic condition in Assumption (A2), since $\beta < 0$, we immediately obtain:
\[
2 \delta Y_s \bigl( g(s, Y_s^1) - g(s, Y_s^2) \bigr) \le{} 2 \beta |\delta Y_s|^2 .
\]
Since the measure induced by the non-decreasing process $\kappa_s$ is non-negative, it follows that:
\begin{equation}
2 \int_t^T e^{\alpha s} \delta Y_s \bigl( g(s, Y_s^1) - g(s, Y_s^2) \bigr) \dif \kappa_s \le{} 2 \beta \int_t^T e^{\alpha s} |\delta Y_s|^2 \dif \kappa_s \le{} 0 . \label{est_g_diff}
\end{equation}

Third, we take the mathematical expectation of the integral term involving the difference of the compensator processes $\dif \delta K_s$. Combining the reflection conditions satisfied by the two solutions (i.e., $\EE[Y_s^i] \ge{} u_s$) and the Skorokhod flatness condition $\int_0^T (\EE[Y_s^i] - u_s) \dif K_s^i ={} 0$, we can expand and simplify the cross terms as follows:
\begin{align*}
\EE\Bigl[ 2 \int_t^T e^{\alpha s} \delta Y_s \dif \delta K_s \Bigr] ={}& 2 \int_t^T e^{\alpha s} \EE[\delta Y_s] \dif \delta K_s \\
={}& 2 \int_t^T e^{\alpha s} \bigl( \EE[Y_s^1] - \EE[Y_s^2] \bigr) (\dif K_s^1 - \dif K_s^2) \\
={}& 2 \int_t^T e^{\alpha s} \bigl( \EE[Y_s^1] - u_s + u_s - \EE[Y_s^2] \bigr) (\dif K_s^1 - \dif K_s^2) \\
={}& 2 \int_t^T e^{\alpha s} \bigl( \EE[Y_s^1] - u_s \bigr) \dif K_s^1 - 2 \int_t^T e^{\alpha s} \bigl( \EE[Y_s^1] - u_s \bigr) \dif K_s^2 \\
&\quad + 2 \int_t^T e^{\alpha s} \bigl( u_s - \EE[Y_s^2] \bigr) \dif K_s^1 - 2 \int_t^T e^{\alpha s} \bigl( u_s - \EE[Y_s^2] \bigr) \dif K_s^2 .
\end{align*}
By the flatness condition, the first and fourth terms are strictly zero. Meanwhile, since $\EE[Y_s^1] \ge{} u_s$ and the measure $\dif K_s^2 \ge{} 0$, the second term is non-positive; similarly, since $\EE[Y_s^2] \ge{} u_s$ and $\dif K_s^1 \ge{} 0$, the third term is non-positive. Therefore:
\begin{equation}
\begin{aligned}
\EE\Bigl[ 2 \int_t^T e^{\alpha s} \delta Y_s \dif \delta K_s \Bigr] ={}& -2 \int_t^T e^{\alpha s} \bigl( \EE[Y_s^1] - u_s \bigr) \dif K_s^2 + 2 \int_t^T e^{\alpha s} \bigl( u_s - \EE[Y_s^2] \bigr) \dif K_s^1 \\
\le{}& 0 .
\end{aligned} \label{est_k_diff}
\end{equation}

Finally, taking the expectation on both sides of \eqref{ito_diff}, since the corresponding stochastic integral is a martingale, its expectation is zero. Substituting the estimates \eqref{est_f_diff}, \eqref{est_g_diff}, and \eqref{est_k_diff}, if we choose a sufficiently large constant $\alpha \ge{} 2 L_f + 8 L_f^2 + 5/4$, we can offset the integral terms containing $|\delta Y_s|^2$ on the right-hand side of the inequality, thereby yielding:
\[
\EE[e^{\alpha t} |\delta Y_t|^2] + \frac12 \EE\Bigl[ \int_t^T e^{\alpha s} |\delta Z_s|^2 \dif s \Bigr] \le{} \EE[e^{\alpha T} |\delta \xi|^2] + \EE\Bigl[ \int_t^T e^{\alpha s} |\delta f(s, Y_s^2, Z_s^2, \nu_s^2)|^2 \dif s \Bigr] .
\]

The above inequality holds for any $t \in [0,T]$. Similar to the technique used in the a priori estimates, we can take the infimum over $t$ on the right-hand side (i.e., expanding the integration interval to $[0,T]$), and apply the BDG inequality on the left-hand side to estimate the supremum of the trajectory of $Y$, while extracting the uniformly bounded exponential weight factor $e^{\alpha t}$. It follows that there exists a constant $C > 0$ depending only on parameters $L_f, \beta, T$ such that:
\begin{equation}\label{est_y_diff}
\sup_{0 \le{} t \le{} T} \EE|\delta Y_t|^2 + \EE\Bigl[ \int_0^T |\delta Z_s|^2 \dif s \Bigr] \le{} C \left( \EE|\delta \xi|^2 + \EE\Bigl[ \int_0^T |\delta f(s, Y_s^2, Z_s^2, \nu_s^2)|^2 \dif s \Bigr] \right) .
\end{equation}

This completes the proof of the stability estimate lemma.
\end{proof}

\section{Uniqueness of the Solution}\label{sec:uniqueness}

Based on the stability estimates established in the previous section, the uniqueness of the solution can be derived as a direct corollary. Below, we present the proposition on the uniqueness of the solution and its proof.

\begin{proposition}[Uniqueness of the Solution]\label{prop:uniqueness}
Under Assumptions (A1)--(A4), if equation \eqref{equation} admits a solution in the space $S^2\times H^2\times A_d^2$, then this solution is unique.
\end{proposition}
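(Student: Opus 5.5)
Let $(Y^1,Z^1,K^1)$ and $(Y^2,Z^2,K^2)$ be two solutions of \eqref{equation} in $S^2\times H^2\times A_d^2$. Both share the same data $\xi$, $f$, $g$, $u$, $\kappa$. The plan is to apply Lemma~\ref{lem:stability} with $\xi^1=\xi^2=\xi$ and $f^1=f^2=f$. Then $\delta\xi=0$ and $\delta f\equiv 0$, so the right-hand side vanishes and
\[
\sup_{0\le t\le T}\EE|\delta Y_t|^2+\EE\Bigl[\int_0^T|\delta Z_s|^2\dif s\Bigr]=0 .
\]
Hence $Y^1_t=Y^2_t$ a.s. for each $t$, and $Z^1=Z^2$ $\dif\PP\otimes\dif t$-a.e. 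Since $Y^1,Y^2$ are continuous, agreement on rational times gives indistinguishability. Thus $Y^1=Y^2$ in $S^2$ and $Z^1=Z^2$ in $H^2$.

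Next we identify $K$. Subtract the two equations at time $t$. Since $Y^1=Y^2$ and $Z^1=Z^2$, the laws coincide, $\nu^1_s=\nu^2_s$, so the $f$-integrals, $g$-integrals and stochastic integrals cancel pathwise. What remains is $K^1_T-K^1_t=K^2_T-K^2_t$ for all $t$, almost surely. Both $K^i$ are deterministic with $K^i_0=0$. Taking $t=0$ gives $K^1_T=K^2_T$, and therefore $K^1_t=K^2_t$ for every $t\in[0,T]$. Alternatively, taking expectations, $K^i_T-K^i_t=\EE[Y^i_t]-\EE[\xi]-\EE[\int_t^T f\dif s]-\EE[\int_t^T g\dif\kappa_s]$, which gives the same conclusion.

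The main point needing care is that Lemma~\ref{lem:stability} applies here. Its proof uses the Skorokhod cross-term estimate \eqref{est_k_diff}, which holds since both solutions satisfy the same constraint with the same $u$. It also uses the strict monotonicity of $g$ in \eqref{est_g_diff}. Integrability is needed so that the stochastic integral is a true martingale; this follows from $Y^i\in S^2$ and $Z^i\in H^2$ via BDG. With these checks, uniqueness follows directly.
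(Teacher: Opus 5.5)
Your proposal is correct and follows the paper's proof: you apply Lemma~\ref{lem:stability} with $\delta\xi=0$ and $\delta f\equiv 0$ to get $Y^1=Y^2$ and $Z^1=Z^2$, then identify $K$ from the dynamics. The only cosmetic difference is that your main argument subtracts the two equations pathwise and uses $K^i_0=0$, whereas the paper takes expectations to write $K_t$ explicitly in terms of $(Y,Z)$, which is the alternative you also mention.
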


\begin{proof}
Let $(Y^1, Z^1, K^1)$ and $(Y^2, Z^2, K^2)$ be two solutions to equation \eqref{equation} with the same terminal condition $\xi$ and the same generator $f$. Under the setup of the stability lemma, this is equivalent to setting $\xi^1 ={} \xi^2 ={} \xi$ and $f^1 ={} f^2 ={} f$.

In this case, the terminal error term $\delta \xi ={} 0$, and the difference in the generators is identically zero: $\delta f(s, Y_s^2, Z_s^2, \nu_s^2) ={} f(s, Y_s^2, Z_s^2, \nu_s^2) - f(s, Y_s^2, Z_s^2, \nu_s^2) ={} 0$. Applying the stability estimate inequality \eqref{est_y_diff}, we obtain:
\[
\sup_{0 \le{} t \le{} T} \EE|\delta Y_t|^2 + \EE\left[\int_0^T |\delta Z_s|^2 \dif s\right] \le{} 0.
\]

Since the above non-negative norms are bounded above by zero, this implies that for all $t \in [0,T]$, $\EE|\delta Y_t|^2 ={} 0$ and $\EE\left[\int_0^T |\delta Z_s|^2 \dif s\right] ={} 0$. Combined with the path continuity, this indicates that $Y_t^1 ={} Y_t^2$ for all $t \in [0,T]$ in the $\PP$-almost sure sense. Meanwhile, $Z_t^1 ={} Z_t^2$ almost everywhere under the $\PP \times \dif t$ measure. Thus, the uniqueness of the first two components $(Y, Z)$ of the solution is proved.

Next, we need to prove the uniqueness of the reflection compensator process $K$. Rearranging the original equation \eqref{equation}, for any $t \in [0,T]$ (with the usual convention $K_0 ={} 0$), we have:
\[
K_t ={} Y_0 - Y_t - \int_0^t f(s, Y_s, Z_s, \nu_s) \dif s - \int_0^t g(s, Y_s) \dif \kappa_s + \int_0^t Z_s \dif B_s.
\]

Note that $K$ is a deterministic process by definition. Therefore, taking the mathematical expectation on both sides of the above equation eliminates the stochastic integral term corresponding to the Brownian motion, thereby giving an explicit expression for $K_t$:
\[
K_t ={} \EE[Y_0] - \EE[Y_t] - \int_0^t \EE[f(s, Y_s, Z_s, \nu_s)] \dif s - \int_0^t \EE[g(s, Y_s)] \dif \kappa_s.
\]

Since it has been proved above that $Y^1 ={} Y^2$ and $Z^1 ={} Z^2$, which further determines that they share the same joint probability distribution $\nu_s^1 ={} \nu_s^2$ at each moment, substituting this information into the above equation yields, for all $t \in [0,T]$:
\[
K_t^1 ={} K_t^2.
\]

This proves that the compensator process $K$ is also unique, which completes the proof of the uniqueness of the solution $(Y, Z, K)$ to the equation.
\end{proof}

\section{Construction of the Solution via Penalization Method}\label{sec:construction}

Having proved the uniqueness and stability of the solution to equation \eqref{equation}, we now focus on proving its existence. To this end, we employ the penalization method. Since the obstacle process $u$ in the original equation may lack sufficient regularity, we first construct its smooth approximation sequence $u^k$, and introduce an unconstrained BSDE with a penalty term (i.e., the penalized equation) for each smooth obstacle. Subsequently, by establishing uniform a priori estimates and approximation error estimates for the solutions of the penalized equations, we prove the convergence of the penalized solution sequence, thereby ultimately obtaining the solution to the original equation.

\subsection{Smooth Approximation Sequence for the Obstacle Boundary $u$}

Since the obstacle boundary $u:[0,T]\to\mathbb{R}$ given in Assumption (A4) is merely a deterministic continuous function, its lack of differentiability poses difficulties in the analysis of the penalty term. Therefore, we need to construct its smooth approximation sequence via the mollification method. First, we continuously extend $u$ to the entire real axis (for instance, setting $u_t={}u_0$ for $t<0$, and $u_t={}u_T$ for $t>T$).

Let $\rho:\mathbb{R}\to\mathbb{R}$ be a non-negative smooth function with its support contained in $[-1,1]$ such that $\int_\mathbb{R}\rho(t)\dif t={}1$. For any $k\ge{}1$, define the mollifier $\rho_k(t):={}k\rho(k t)$, and define the approximation sequence via convolution:
\[
u_t^k :={} (u\ast\rho_k)(t) ={} \int_\mathbb{R} u_{t-s}\,\rho_k(s)\dif s ={} \int_{-1/k}^{1/k} u_{t-s}\,k\rho(k s)\dif s,\quad 0\le{} t\le{} T.
\]

The sequence $(u^k)_{k={}1}^\infty$ constructed in this way satisfies the following properties:
\begin{itemize}
\item For each $k\ge{}1$, $u^k$ is an infinitely differentiable smooth function on $[0,T]$;
\item Since continuous functions on a closed interval are uniformly continuous, $u^k$ converges uniformly to $u$ on $[0,T]$ as $k\to\infty$, i.e.,
\[
\lim_{k\to\infty}\sup_{0\le{} t\le{} T}|u_t^k-u_t|={}0;
\]
\item There exists a constant $C_k>0$ (possibly depending on $k$) such that its first derivative is bounded, i.e., $\sup_{0\le{} t\le{} T}\left|\frac{du_t^k}{dt}\right|\le{} C_k$.
\end{itemize}

\subsection{Penalized Equations}

Having obtained the smooth obstacle sequence $u^k$, for any given $n\in\mathbb{N}^*$, we consider the following unconstrained penalized equation: for $0\le{} t\le{} T$,
\begin{align}
Y_t^{n,k} ={}& \xi + \int_t^T f(s,Y_s^{n,k},Z_s^{n,k},\nu_s^{n,k})\dif s - \int_t^T Z_s^{n,k}\dif B_s \nonumber\\
&\quad + \int_t^T g(s,Y_s^{n,k})\dif\kappa_s + K_T^{n,k} - K_t^{n,k},
\end{align}
where $\nu_s^{n,k} :={} \mathcal{L}[(Y_s^{n,k},Z_s^{n,k})]$, and the deterministic penalty term $K^{n,k}$ is defined as:
\[
K_t^{n,k} :={} n\int_0^t (\mathbb{E}[Y_s^{n,k}] - u_s^k)^-\dif(s+\mathbb{E}[\kappa_s]),\quad 0\le{} t\le{} T.
\]
Here $(x)^-:={}\max(-x,0)$ denotes the negative part of a real number $x$. Since $u_s$ is deterministic and $\mathbb{E}[Y_s^{n,k}]$ depends only on the expectation of the distribution, the constructed penalty process $K^{n,k}$ is a deterministic, non-negative process that is continuous and non-decreasing with respect to time $t$, and satisfies $K_0^{n,k}={}0$.

This penalized equation is essentially an unconstrained generalized McKean--Vlasov type BSDE. According to Theorem 2.1 in \cite{feng2021generalized}, for any given $n,k\ge{}1$, this penalized equation admits a unique solution $(Y^{n,k},Z^{n,k})$ in the space $S^2\times H^2$.

By the construction of the penalty term, it is evident that the following flatness inequality naturally holds:
\begin{equation}\label{flatness_nk}
\int_0^T (\mathbb{E}[Y_s^{n,k}]-u_s^k)\dif K_s^{n,k}\le{} 0.
\end{equation}
As the penalization parameter $n\to\infty$, the penalty term $K^{n,k}$ will exert an increasingly large push on situations that violate the reflection boundary condition $(\mathbb{E}[Y_s^{n,k}]<u_s^k)$, thereby forcing the solution sequence to gradually return above the obstacle boundary.

\subsection{A Priori Estimates for the Penalized Equations}\label{lem_a_priori_penal}

To analyze the convergence of the solution sequence of the penalized equations, it is first necessary to establish its uniform bounds in the space $S^2\times H^2$. The following lemma provides a priori estimates that are independent of the penalization parameter $n$.

\begin{lemma}[A Priori Estimates for the Penalized Equations]\label{lem:a_priori_penal}
Under Assumptions (A1)--(A5), for a given $k\ge{}1$ and any $n\ge{}1$, let $(Y^{n,k},Z^{n,k})$ be the solution to the penalized equation. Then there exist a constant $\alpha>0$ and a constant $C_k>0$ depending on $k$ but independent of $n$, such that the following a priori estimates hold:
\[
\begin{aligned}
&\sup_{0\le{} t\le{} T}\mathbb{E}\left[e^{\alpha t}|Y_t^{n,k}|^2\right] + \mathbb{E}\left[\int_0^T e^{\alpha s}|Z_s^{n,k}|^2\dif s + \int_0^T e^{\alpha s}|Y_s^{n,k}|^2\dif\kappa_s\right] \\
 \le{} & C_k\left(1 + \mathbb{E}\left[e^{\alpha T}|\xi|^2\right] + \mathbb{E}\left[\int_0^T e^{\alpha s}|f(s,0,0,\delta_0)|^2\dif s\right] + \mathbb{E}\left[\int_0^T e^{\alpha s}|\psi_s|^2\dif\kappa_s\right]\right).
\end{aligned}
\]
\end{lemma}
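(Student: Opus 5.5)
Apply It\^o's formula to $e^{\alpha t}|Y^{n,k}_t|^2$ on $[t,T]$, as in Lemma~\ref{lem:priori}. Then take expectations, so that the martingale term vanishes; a localization argument is needed here, using $Y^{n,k}\in S^2$ and $Z^{n,k}\in H^2$.

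The $f$-term is treated exactly as in \eqref{est_f}. The $Z$ contributions $\tfrac14\EE|Z_s|^2+\tfrac14\EE|Z_s|^2$ are absorbed into the left side, and the $|Y_s|^2$ and $\EE|Y_s|^2$ contributions are absorbed by choosing $\alpha\ge 2L_f+8L_f^2+5/4$.

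For the $g$-term we keep part of the monotonicity instead of discarding it. Write $2Y_sg(s,Y_s)\le 2\beta|Y_s|^2+2|Y_s|\psi_s\le \beta|Y_s|^2+\frac{1}{-\beta}\psi_s^2$. Integrating against $\dif\kappa_s$ leaves $-\beta\,\EE\int_t^T e^{\alpha s}|Y_s|^2\dif\kappa_s$ on the left-hand side. This is precisely the $\dif\kappa$ term appearing in the claimed estimate, and since $\beta<0$ its coefficient is positive.

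The main obstacle is the penalty term $2\EE\int_t^T e^{\alpha s}Y_s\dif K^{n,k}_s=2\int_t^T e^{\alpha s}\EE[Y_s]\dif K^{n,k}_s$, which must be bounded independently of $n$. On the support of $\dif K^{n,k}$ we have $\EE[Y_s]<u^k_s$. Writing $\EE[Y_s]=u^k_s-(\EE[Y_s]-u^k_s)^-$ there gives
\[
\int_t^T e^{\alpha s}\EE[Y_s]\dif K^{n,k}_s\le \int_t^T e^{\alpha s}u^k_s\dif K^{n,k}_s-n\int_t^T e^{\alpha s}\bigl((\EE[Y_s]-u^k_s)^-\bigr)^2\dif(s+\EE[\kappa_s]).
\]
The favorable quadratic penalty term is kept. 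By Young's inequality,
\[
n\,u^k_s(\EE[Y_s]-u^k_s)^-\le \tfrac n2\bigl((\EE Y_s-u^k_s)^-\bigr)^2+\tfrac n2|u^k_s|^2,
\]
but the second piece still grows with $n$, so Young's inequality alone is not enough.

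To remove the $n$-dependence, I would first use that the terminal condition satisfies $\EE\xi\ge u_T$, and then control $(\EE Y_s-u^k_s)^-$ directly through an ODE for $m_t:=\EE[Y_t]-u^k_t$. Taking expectations in the penalized equation, $m$ satisfies a backward integral equation with drift $\EE f+\dot u^k$, the $\EE[g\,\dif\kappa]$ term, and the push $n\,m^-_s\dif(s+\EE\kappa_s)$. Applying the chain rule to $(m_t^-)^2$ yields
\[
\sup_t (m_t^-)^2+n\int_0^T (m_s^-)^2\dif(s+\EE\kappa_s)\le C\bigl(|(u_T-u^k_T)^-|^2+\text{data}+C_k^2\bigr),
\]
where the $\dot u^k$ contribution is bounded by $C_k$. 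This is exactly where the smoothness of $u^k$ enters, and it explains the $k$-dependence of the constant. The $\EE g(s,Y_s)\dif\kappa_s$ term is only bounded by $\EE\int(\psi_s+L_g|Y_s|)\dif\kappa_s$, which is handled by Young's inequality against the $-\beta$ term already on the left and the integrability assumption (A5).

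With this bound, $\int e^{\alpha s}u^k_s\dif K^{n,k}_s\le \|u^k\|_\infty e^{\alpha T}K^{n,k}_T$, and
\[
K^{n,k}_T=n\int m_s^-\dif(s+\EE\kappa_s)\le \Bigl(n\int (m^-)^2\dif(s+\EE\kappa_s)\Bigr)^{1/2}\bigl(n(T+\EE\kappa_T)\bigr)^{1/2}.
\]
The second factor still carries a $\sqrt n$. If this route fails, I would instead compare $m$ with the explicit solution of the linear ODE, which gives $n\,m^-_s\le C_k$ directly. This is a standard penalization bound for a smooth obstacle, since the push rate is $n$ while the obstacle's slope is at most $C_k$. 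It yields $K^{n,k}_T\le C_k(T+\EE\kappa_T)$ uniformly in $n$.

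Substituting this into the It\^o inequality, absorbing the remaining terms, and taking the supremum over $t$ of the expectation gives the claimed estimate. The additive constant $1$ inside $C_k(\cdot)$ absorbs the $K^{n,k}_T$ and $\|u^k\|_\infty$ contributions.
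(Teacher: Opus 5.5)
\textbf{There is a gap: the uniform-in-$n$ bound on $K^{n,k}_T$ is not established.} Your treatment of the $f$- and $g$-terms is fine; the bound $2Y_sg(s,Y_s)\le\beta|Y_s|^2+\psi_s^2/(-\beta)$ is exactly what produces the $\dif\kappa$ term. But because you apply It\^o's formula to $e^{\alpha t}|Y^{n,k}_t|^2$ itself, the whole proof hinges on that bound for $K^{n,k}_T$, and the argument you give for it is not valid.

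The pointwise claim $n\,m^-_s\le C_k$ is false under (A1)--(A5). The backward equation for $m_t=\EE[Y^{n,k}_t]-u^k_t$ is driven not only by the slope of $u^k$. It is also driven by $\EE[f(s,Y^{n,k}_s,Z^{n,k}_s,\nu^{n,k}_s)]$, which is merely square integrable in $s$ and depends on the unknown solution. It is further driven by the measure $\EE[g(s,Y^{n,k}_s)\dif\kappa_s]$, whose density with respect to $\dif\EE[\kappa_s]$ need not be bounded. For large $n$, $n\,m^-_s$ tracks the negative part of this drift and blows up wherever it does. For example, with $\kappa\equiv0$, $u\equiv0$, $\xi=0$ and $f(s,y,z,\nu)=-|s-t_0|^{-1/3}$, one gets $n\,m^-_{t_0}\ge c\,n^{1/3}$ for some $c>0$.

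Moreover, $m_T^-=(\EE\xi-u^k_T)^-$ need not vanish, since $u^k_T\neq u_T$ in general. This is what actually defeats your first route, leaving a term of order $\sqrt n\,m^-_T$. The factor $\sqrt n$ you point to is not the problem: it cancels once you use $2ab\le\frac{n}{2}a^2+\frac{2}{n}b^2$, as in Lemma~\ref{lem:error_estimate_penal}. So the estimate $K^{n,k}_T\le C_k(T+\EE\kappa_T)$ is unsupported. So is your closing claim that the constant $1$ absorbs the $K^{n,k}_T$ contribution.

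\textbf{How to repair it.} The uniform bound does hold, but only in integrated form and with a right-hand side that depends on the solution. Since $m$ is continuous and of finite variation, $\dif(m^-_t)=-\mathbf 1_{\{m_t<0\}}\dif m_t$. Integrating over $[0,T]$ and using $\dif K^{n,k}_t=n\,m^-_t\dif(t+\EE[\kappa_t])$ yields
\[
K^{n,k}_T\le(\EE\xi-u^k_T)^-+\int_0^T\Bigl|\EE[f(s,Y^{n,k}_s,Z^{n,k}_s,\nu^{n,k}_s)]+\frac{\dif u^k_s}{\dif s}\Bigr|\dif s+\EE\int_0^T|g(s,Y^{n,k}_s)|\dif\kappa_s .
\]
This is independent of $n$ but linear in the norms of $(Y^{n,k},Z^{n,k})$. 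You must still absorb $2e^{\alpha T}\|u^k\|_\infty K^{n,k}_T$ via Young's inequality into three terms: $\frac12\EE\int_0^Te^{\alpha s}|Z_s|^2\dif s$, a surplus in the $\alpha$-term (take $\alpha>2L_f+8L_f^2+5/4$), and $-\beta\,\EE\int_0^Te^{\alpha s}|Y_s|^2\dif\kappa_s$, using $\EE\kappa_T<\infty$. Do this first at $t=0$ and then for general $t$; with this repair your route closes.

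\textbf{The paper's route.} The paper avoids the issue altogether by applying It\^o's formula to $e^{\alpha t}|Y^{n,k}_t-u^k_t|^2$. The penalty then contributes exactly $-2n\int_t^Te^{\alpha s}|(\EE[Y^{n,k}_s]-u^k_s)^-|^2\dif(s+\EE[\kappa_s])\le0$, so $K^{n,k}$ never has to be estimated. The $k$-dependence enters only through the drift $\frac{\dif u^k_s}{\dif s}$ and the cross term $-2u^k_sg(s,Y^{n,k}_s)$.
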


\begin{proof}
Denote $X_t :={} Y_t^{n,k} - u^k_t$. Applying Itô's formula to $e^{\alpha t} |X_t|^2$ on $[t, T]$, we obtain:
\begin{equation}\label{ito:x}
\begin{aligned}
& e^{\alpha t} |X_t|^2 + \int_t^T e^{\alpha s} |Z_s^{n,k}|^2 \dif s + \int_t^T \alpha e^{\alpha s} |X_s|^2 \dif s \\
={}& e^{\alpha T} |\xi - u^k_T|^2 + 2 \int_t^T e^{\alpha s} X_s \bigl( f(s, Y_s^{n,k}, Z_s^{n,k}, \nu_s^{n,k}) + \frac{\dif u^k_s}{\dif s} \bigr) \dif s \\
&\quad + 2 \int_t^T e^{\alpha s} X_s g(s, Y_s^{n,k}) \dif \kappa_s + 2 \int_t^T e^{\alpha s} X_s \dif K_s^{n,k} \\
&\quad - 2 \int_t^T e^{\alpha s} X_s Z_s^{n,k} \dif B_s .
\end{aligned}
\end{equation}

For the integral term involving $f(s, Y_s^{n,k}, Z_s^{n,k}, \nu_s^{n,k})$, by adding and subtracting a term $f(s, 0, 0, \delta_0)$ and combining it with the Lipschitz condition in Assumption (A1), we can deduce:
\begin{align*}
& 2 X_s \bigl( f(s, Y_s^{n,k}, Z_s^{n,k}, \nu_s^{n,k}) + \frac{\dif u^k_s}{\dif s} \bigr) \\
\le{} & 2 |X_s| \, |f(s, Y_s^{n,k}, Z_s^{n,k}, \nu_s^{n,k}) - f(s, 0, 0, \delta_0)| + 2 |X_s| \, |f(s, 0, 0, \delta_0)| + 2 |X_s| \, \bigl| \frac{\dif u^k_s}{\dif s} \bigr| \\
\le{} & 2 L_f |X_s| \bigl( |Y_s^{n,k}| + |Z_s^{n,k}| + W_2(\nu_s^{n,k}, \delta_0) \bigr) + 2 |X_s| \, |f(s, 0, 0, \delta_0)| + 2 |X_s| \, \bigl| \frac{\dif u^k_s}{\dif s} \bigr| .
\end{align*}

Noting that $Y_s^{n,k} ={} X_s + u_s^k$ and $W_2(\nu_s^{n,k}, \delta_0) \le{} \bigl(\EE[ |Y_s^{n,k}|^2 + |Z_s^{n,k}|^2 ]\bigr)^{1/2}$, we use Young's inequality for the following estimations:
\begin{align*}
2 L_f |X_s| |Z_s^{n,k}| \le{} & 4 L_f^2 |X_s|^2 + \frac14 |Z_s^{n,k}|^2, \\
2 L_f |X_s| |Y_s^{n,k}| \le{} & 2 L_f |X_s| (|X_s| + |u_s^k|) \le{} 3 L_f |X_s|^2 + L_f |u_s^k|^2, \\
2 L_f |X_s| W_2(\nu_s^{n,k}, \delta_0) \le{} & 4 L_f^2 |X_s|^2 + \frac14 \EE\bigl[ |Y_s^{n,k}|^2 + |Z_s^{n,k}|^2 \bigr] \\
\le{} & 4 L_f^2 |X_s|^2 + \frac12 \EE\bigl[ |X_s|^2 \bigr] + \frac12 |u_s^k|^2 + \frac14 \EE\bigl[ |Z_s^{n,k}|^2 \bigr], \\
2 |X_s| \, |f(s, 0, 0, \delta_0)| \le{} & |X_s|^2 + |f(s, 0, 0, \delta_0)|^2, \\
2 |X_s| \, \bigl| \frac{\dif u^k_s}{\dif s} \bigr| \le{} & |X_s|^2 + \bigl| \frac{\dif u^k_s}{\dif s} \bigr|^2 .
\end{align*}

Combining the above inequalities yields the integral estimate for the terms related to $f$:
\begin{equation}\label{est:fn}
\begin{aligned}
& 2 \int_t^T e^{\alpha s} X_s \bigl( f(s, Y_s^{n,k}, Z_s^{n,k}, \nu_s^{n,k}) + \frac{\dif u^k_s}{\dif s} \bigr) \dif s \\
\le{} & \int_t^T e^{\alpha s} \Bigl[ (8 L_f^2 + 3 L_f + 2) |X_s|^2 + \frac14 |Z_s^{n,k}|^2 + \frac12 \EE\bigl[ |X_s|^2 \bigr] + \frac14 \EE\bigl[ |Z_s^{n,k}|^2 \bigr] \\
&\qquad + (L_f + \tfrac12) |u_s^k|^2 + |f(s, 0, 0, \delta_0)|^2 + \bigl| \frac{\dif u^k_s}{\dif s} \bigr|^2 \Bigr] \dif s .
\end{aligned}
\end{equation}

For the term involving $g(s, Y_s^{n,k})$, since $X_s :={} Y_s^{n,k} - u^k_s$, we expand it and decompose it into two parts:
\begin{align*}
2 X_s g(s, Y_s^{n,k}) ={} 2 (Y_s^{n,k} - u^k_s) g(s, Y_s^{n,k}) ={} 2 Y_s^{n,k} g(s, Y_s^{n,k}) - 2 u^k_s g(s, Y_s^{n,k}) .
\end{align*}

For the first part, using the monotonicity condition $(y-0)(g(s,y)-g(s,0)) \le{} \beta |y|^2$ (noting that $\beta < 0$) and $|g(s,0)| \le{} \psi_s$ from Assumption (A2), and combining with Young's inequality, we get:
\begin{align*}
2 Y_s^{n,k} g(s, Y_s^{n,k}) ={}& 2 Y_s^{n,k} \bigl( g(s, Y_s^{n,k}) - g(s,0) + g(s,0) \bigr) \\
\le{} & 2 \beta |Y_s^{n,k}|^2 + 2 |Y_s^{n,k}| \psi_s \\
\le{} & 2 \beta |Y_s^{n,k}|^2 - \frac{\beta}{2} |Y_s^{n,k}|^2 + C_\beta \psi_s^2 \\
={}& \frac32 \beta |Y_s^{n,k}|^2 + C_\beta \psi_s^2,
\end{align*}
where the constant $C_\beta ={} -2/\beta > 0$.

For the second part, using the condition $|g(s,y)| \le{} \psi_s + L_g |y|$ from Assumption (A2), and applying Young's inequality again:
\begin{align*}
-2 u^k_s g(s, Y_s^{n,k}) \le{} & 2 |u^k_s| \bigl( \psi_s + L_g |Y_s^{n,k}| \bigr) \\
={}& 2 |u^k_s| \psi_s + 2 L_g |u^k_s| |Y_s^{n,k}| \\
\le{} & |u^k_s|^2 + \psi_s^2 + C' |u^k_s|^2 - \frac{\beta}{2} |Y_s^{n,k}|^2 \\
={}& (C' + 1) |u^k_s|^2 + \psi_s^2 - \frac{\beta}{2} |Y_s^{n,k}|^2,
\end{align*}
where the constant $C' ={} -2 L_g^2 / \beta > 0$.

Combining the estimates of these two parts and rearranging the constants (denoting $C ={} C' + 1$, and merging the coefficient of $\psi_s^2$ into $C_\beta$), we obtain the final estimate for the terms related to $g$:
\begin{equation}\label{est:gn}
2 X_s g(s, Y_s^{n,k}) \le{} \beta |Y_s^{n,k}|^2 + C |u^k_s|^2 + C_\beta \psi_s^2 .
\end{equation}

Since $K^{n,k}$ is a deterministic process and $e^{\alpha s}$ is also deterministic, the expectation operator can be moved inside the integral:
\begin{align*}
\EE\Bigl[ \int_t^T e^{\alpha s} X_s \dif K_s^{n,k} \Bigr] ={}& \int_t^T e^{\alpha s} \EE[X_s] \dif K_s^{n,k} \\
={}& \int_t^T e^{\alpha s} \bigl( \EE[Y_s^{n,k}] - u^k_s \bigr) \dif K_s^{n,k} .
\end{align*}
Substituting $\dif K_s^{n,k} ={} n \bigl( \EE[Y_s^{n,k}] - u^k_s \bigr)^- \dif (s + \EE[\kappa_s])$ into the above equation. According to the property of the negative part of a real number, $x (x)^- ={} - |x^-|^2 \le{} 0$, we can deduce:
\begin{equation}\label{est:kn}
\begin{aligned}
2 \EE\Bigl[ \int_t^T e^{\alpha s} X_s \dif K_s^{n,k} \Bigr] ={}& 2 n \int_t^T e^{\alpha s} \bigl( \EE[Y_s^{n,k}] - u^k_s \bigr) \bigl( \EE[Y_s^{n,k}] - u^k_s \bigr)^- \dif (s + \EE[\kappa_s]) \\
={}& - 2 n \int_t^T e^{\alpha s} \bigl| \bigl( \EE[Y_s^{n,k}] - u^k_s \bigr)^- \bigr|^2 \dif (s + \EE[\kappa_s]) \\
\le{} & 0 .
\end{aligned}
\end{equation}

Taking the expectation on both sides of \eqref{ito:x} and substituting the estimation results from \eqref{est:fn}, \eqref{est:gn}, and \eqref{est:kn}. Choose a sufficiently large $\alpha$ to absorb the terms containing $|X_s|^2$ on the right-hand side of the equation. Since the expectation of the stochastic integral term is zero, we can obtain that there exists a constant $C > 0$ such that the following a priori estimate holds:
\begin{align*}
& \sup_{0 \le{} t \le{} T} \EE\bigl[ e^{\alpha t} |X_t|^2 \bigr] + \EE\Bigl[ \int_0^T e^{\alpha s} |Z_s^{n,k}|^2 \dif s + \int_0^T e^{\alpha s} |Y^{n,k}_s|^2 \dif \kappa_s \Bigr] \\
\le{} & C \Bigl( \EE\bigl[ e^{\alpha T} |\xi - u^k_T|^2 \bigr] + \EE\Bigl[ \int_0^T e^{\alpha s} \bigl( |f(s, 0, 0, \delta_0)|^2 + \bigl| \frac{\dif u^k_s}{\dif s} \bigr|^2 + |u^k_s|^2 \bigr) \dif s \Bigr] \\
&\qquad + \EE\Bigl[ \int_0^T e^{\alpha s} \bigl( |\psi_s|^2 + L_g |u^k_s|^2 \bigr) \dif \kappa_s \Bigr] \Bigr) .
\end{align*}

Since $Y_t^{n,k} ={} X_t + u_t^k$, using the inequality $|Y_t^{n,k}|^2 \le{} 2|X_t|^2 + 2|u_t^k|^2$ and combining it with the above results regarding $X_t$, we can further obtain an a priori estimate for $Y_t^{n,k}$. Noting that $u^k$ and its derivative are bounded on $[0,T]$, and $u^k$ is a deterministic function, we can absorb all terms related to $u^k$ into the constant. Therefore, there exists a constant $C_k > 0$ depending on $k$ (but independent of $n$) such that:
\begin{align*}
& \sup_{0 \le{} t \le{} T} \EE\bigl[ e^{\alpha t} |Y_t^{n,k}|^2 \bigr] + \EE\Bigl[ \int_0^T e^{\alpha s} |Z_s^{n,k}|^2 \dif s + \int_0^T e^{\alpha s} |Y_s^{n,k}|^2 \dif \kappa_s \Bigr] \\
\le{} & C_k \Bigl( 1 + \EE\bigl[ e^{\alpha T} |\xi|^2 \bigr] + \EE\Bigl[ \int_0^T e^{\alpha s} |f(s, 0, 0, \delta_0)|^2 \dif s \Bigr] + \EE\Bigl[ \int_0^T e^{\alpha s} |\psi_s|^2 \dif \kappa_s \Bigr] \Bigr) .
\end{align*}
This completes the proof of the lemma.
\end{proof}

\subsection{Approximation Error Estimates}\label{lem_error_estimate_penal}

Building on the aforementioned lemma, we need to further estimate the error term for the deviation from the reflection boundary, which is the extent to which the mathematical expectation $\mathbb{E}[Y_t^{n,k}]$ falls below the obstacle boundary $u^k_t$. The following lemma demonstrates that this extent of deviation tends to zero as the penalization parameter $n$ increases.

\begin{lemma}[Approximation Error Estimates]\label{lem:error_estimate_penal}
Under Assumptions (A1)--(A5), for a given $k\ge{}1$ and any $n\ge{}1$, let $(Y^{n,k},Z^{n,k})$ be the solution to the penalized equation. Denoting $y_t :={} \mathbb{E}[Y_t^{n,k}] - u^k_t$, there exists a constant $C_k>0$ independent of $n$ such that:
\[
\sup_{0\le{} t\le{} T}|y_t^-|^2 \le{} \frac{C_k}{n},
\]
and
\[
\int_0^T |y_t^-|^2\dif(t+\mathbb{E}[\kappa_t]) \le{} \frac{C_k}{n^2}.
\]
\end{lemma}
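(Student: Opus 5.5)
The plan is to pass to expectations and work with the deterministic function $y_t = \mathbb{E}[Y^{n,k}_t]-u^k_t$, which satisfies a scalar backward integral equation. Taking expectations in the penalized equation (the stochastic integral has zero mean by Lemma~\ref{lem:a_priori_penal}) and using the smoothness of $u^k$, we get
\[
y_t = y_T + \int_t^T \Bigl(\mathbb{E}[f(s,Y^{n,k}_s,Z^{n,k}_s,\nu^{n,k}_s)] + \tfrac{\dif u^k_s}{\dif s}\Bigr)\dif s + \mathbb{E}\Bigl[\int_t^T g(s,Y^{n,k}_s)\dif\kappa_s\Bigr] + n\int_t^T y_s^-\dif(s+\mathbb{E}[\kappa_s]),
\]
with $y_T = \mathbb{E}[\xi]-u^k_T$. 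Since $\mathbb{E}[\xi]\ge u_T$, we have $y_T^-\le |u_T-u^k_T|$. For cleanliness I would either extend $u$ so that $u^k_T\le u_T$, or simply absorb $y_T^-$ into the constants.

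The key step is to apply the chain rule to the $C^1$ convex function $\phi(x)=(x^-)^2$ along the finite-variation path $y$. Because $y$ is deterministic and of bounded variation, no Itô correction appears. Since $\phi'(x) = -2x^-$, this gives
\[
|y_t^-|^2 + 2n\int_t^T |y_s^-|^2\dif(s+\mathbb{E}[\kappa_s]) = |y_T^-|^2 + 2\int_t^T y_s^-\, \dif A_s,
\]
where $\dif A_s = \bigl(\mathbb{E}[f(\cdots)] + \dot u^k_s\bigr)\dif s + \mathbb{E}[g(s,Y^{n,k}_s)\dif\kappa_s]$.

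The main obstacle is the $\dif\kappa$ term: $\mathbb{E}[g(s,Y_s)\dif\kappa_s]$ is a measure, but it is not obviously absolutely continuous with respect to $\dif\mathbb{E}[\kappa_s]$ with an $L^2$ density. I would handle it directly instead. Since $y^-$ is deterministic and bounded, we can write $\int y_s^-\,\mathbb{E}[g\,\dif\kappa_s] = \mathbb{E}\int y_s^- g(s,Y_s)\dif\kappa_s$. Young's inequality then bounds this by
\[
\varepsilon n\,\mathbb{E}\int|y_s^-|^2\dif\kappa_s + \frac{1}{\varepsilon n}\,\mathbb{E}\int |g(s,Y_s)|^2\dif\kappa_s .
\]
The first piece equals $\varepsilon n\int |y^-_s|^2\dif\mathbb{E}[\kappa_s]$ and is absorbed on the left. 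The second piece is bounded by $C\,\mathbb{E}\int(\psi_s^2+L_g^2|Y_s|^2)\dif\kappa_s\le C_k$, using (A2) and the $\dif\kappa$ bound in Lemma~\ref{lem:a_priori_penal}. The $\dif s$ term is treated the same way, with $\mathbb{E}|f(\cdots)|^2+|\dot u^k|^2$ integrable uniformly in $n$ by (A1) and Lemma~\ref{lem:a_priori_penal}.

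Collecting these bounds gives
\[
\sup_t|y_t^-|^2 + n\int_0^T |y_s^-|^2\dif(s+\mathbb{E}[\kappa_s]) \le |y_T^-|^2 + \frac{C_k}{n}.
\]
This yields both $\sup_t|y_t^-|^2\le C_k/n$ and $\int_0^T|y^-|^2\dif(s+\mathbb{E}[\kappa_s])\le C_k/n^2$, provided $y_T^-=0$ by the choice of the extension of $u$. Otherwise $y_T^-$ is a $k$-dependent error that vanishes as $k\to\infty$, and I would record it separately.
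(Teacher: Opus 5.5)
Your argument is the paper's: the chain rule for $(y^-)^2$ along the deterministic finite-variation path $y$, Young's inequality with weights $n$ and $1/n$ on the $\dif s$ and $\dif\kappa$ terms, absorption into $2n\int_t^T|y_s^-|^2\dif(s+\EE[\kappa_s])$, and the uniform bounds of Lemma~\ref{lem:a_priori_penal}. There is one small slip. The chain rule gives $-2\int_t^T y_s^-\dif A_s$ on the right, not $+$ (compare \eqref{diff_y_minus}). This is harmless, since you only bound that term in absolute value.

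Where you depart from the paper is the terminal value, and there you are right. The paper simply asserts $y_T^-=0$, but (A3) only gives $\EE[\xi]\ge u_T$. With the paper's constant extension, one has $u^k_T-u_T=\int_0^{1/k}(u_{T-s}-u_T)\rho_k(s)\dif s$. This is strictly positive for large $k$ if, say, $u$ is strictly decreasing near $T$ and $\rho$ charges $(0,1]$. If in addition $\EE[\xi]=u_T$, then $y_T^->0$ is independent of $n$. Since $\sup_t|y_t^-|^2\ge|y_T^-|^2$, the bound $C_k/n$ is impossible, so the lemma as stated fails for that construction.

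Consequently only your first option is viable. Absorbing $|y_T^-|^2$ into the constant cannot produce any rate in $n$. The simplest way to arrange $y_T^-=0$ is to replace $u^k$ by $u^k-\sup_{[0,T]}|u^k-u|$. This function is still smooth with bounded derivative and still converges uniformly to $u$, so the later limit arguments are unaffected. It also lies below $u$, whence $u^k_T\le u_T\le\EE[\xi]$.

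Modifying only the extension of $u$ beyond $T$ also works when $\rho$ is symmetric. It does not work for an arbitrary admissible $\rho$ (e.g.\ one supported in $[0,1]$), since then $u^k_T$ depends only on the values of $u$ on $[T-1/k,T]$.
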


\begin{proof}
Applying the chain rule to $|y_t^{-}|^2$ on $[t,T]$ and utilizing the terminal condition $y_T^{-}={}0$, we obtain:
\begin{align}
|y_t^{-}|^2 ={}& -\int_t^T \bigl(2 y_s^{-} \EE[f(s, Y_s^{n,k}, Z_s^{n,k}, \nu_s^{n,k})] + 2n |y_s^{-}|^2 + 2 y_s^{-} \frac{\dif u_s^{k}}{\dif s}\bigr) \dif s \notag\\
&- \int_t^T 2n |y_s^{-}|^2 \dif \EE[\kappa_s] - \int_t^T 2 y_s^{-} \EE[g(s, Y_s^{n,k}) \dif \kappa_s] \label{diff_y_minus}
\end{align}

Using the inequality $2ab \le{} \frac{n}{2} a^2 + \frac{2}{n} b^2$, we estimate the terms involving $f$ and the derivative of $u^k$ as follows:
\begin{align*}
& -\int_t^T \bigl(2 y_s^{-} \EE[f(s, Y_s^{n,k}, Z_s^{n,k}, \nu_s^{n,k})] + 2 y_s^{-} \frac{\dif u_s^{k}}{\dif s}\bigr) \dif s \\
\le{} & \int_t^T \Bigl( \frac{n}{2} |y_s^{-}|^2 + \frac{2}{n} \bigl|\EE[f(s, Y_s^{n,k}, Z_s^{n,k}, \nu_s^{n,k})] + \frac{\dif u_s^{k}}{\dif s}\bigr|^2 \Bigr) \dif s \\
\le{} & \int_t^T \frac{n}{2} |y_s^{-}|^2 \dif s + \frac{4}{n} \int_t^T \Bigl( \bigl|\EE[f(s, Y_s^{n,k}, Z_s^{n,k}, \nu_s^{n,k})]\bigr|^2 + \bigl|\frac{\dif u_s^{k}}{\dif s}\bigr|^2 \Bigr) \dif s .
\end{align*}

Since $f$ satisfies the Lipschitz condition (A1) and $W_2(\nu_s^{n,k}, \delta_0) \le{} \bigl(\EE[ |Y_s^{n,k}|^2 + |Z_s^{n,k}|^2]\bigr)^{1/2}$, we have:
\begin{align*}
\bigl|\EE[f(s, Y_s^{n,k}, Z_s^{n,k}, \nu_s^{n,k})]\bigr|^2 &\le{} \bigl(\EE|f(s, Y_s^{n,k}, Z_s^{n,k}, \nu_s^{n,k})|\bigr)^2 \\
&\le{} \bigl( \EE|f(s,0,0,\delta_0)| + 2L_f \bigl(\EE[ |Y_s^{n,k}|^2 + |Z_s^{n,k}|^2]\bigr)^{1/2} \bigr)^2 \\
&\le{} 2 \bigl(\EE|f(s,0,0,\delta_0)|\bigr)^2 + 8 L_f^2 \EE[ |Y_s^{n,k}|^2 + |Z_s^{n,k}|^2] \\
&\le{} 2 \EE|f(s,0,0,\delta_0)|^2 + 8 L_f^2 \EE[ |Y_s^{n,k}|^2 + |Z_s^{n,k}|^2] .
\end{align*}

Therefore,
\begin{align*}
& -\int_t^T \bigl(2 y_s^{-} \EE[f(s, Y_s^{n,k}, Z_s^{n,k}, \nu_s^{n,k})] + 2 y_s^{-} \frac{\dif u_s^{k}}{\dif s}\bigr) \dif s \\
\le{} & \frac{n}{2} \int_t^T |y_s^{-}|^2 \dif s + \frac{4}{n} \int_t^T \Bigl( 2 \EE|f(s,0,0,\delta_0)|^2 + 8 L_f^2 \EE[ |Y_s^{n,k}|^2 + |Z_s^{n,k}|^2] + \bigl|\frac{\dif u_s^{k}}{\dif s}\bigr|^2 \Bigr) \dif s .
\end{align*}

Since $y_s^{-} \ge{} 0$, we apply Young's inequality to the term involving $g$:
\begin{align*}
- \int_t^T 2 y_s^{-} \EE[g(s, Y_s^{n,k}) \dif \kappa_s] &\le{} \int_t^T 2 y_s^{-} \EE[ |g(s, Y_s^{n,k})| \dif \kappa_s] \\
&\le{} \int_t^T 2 y_s^{-} \EE[(\psi_s + L_g |Y_s^{n,k}|) \dif \kappa_s] \\
&\le{} \frac{n}{2} \int_t^T |y_s^{-}|^2 \dif \EE[\kappa_s] + \frac{4}{n} \int_t^T \EE[ \psi_s^2 + L_g^2 |Y_s^{n,k}|^2 ] \dif \kappa_s .
\end{align*}

Substituting the above two estimates into \eqref{diff_y_minus} and rearranging the terms, we obtain:
\begin{align*}
|y_t^{-}|^2 + n \int_t^T |y_s^{-}|^2 \dif (s + \EE[\kappa_s]) \le{}& \frac{4}{n} \int_t^T \Bigl( 2 \EE|f(s,0,0,\delta_0)|^2 + 8 L_f^2 \EE[ |Y_s^{n,k}|^2 + |Z_s^{n,k}|^2] + \bigl|\frac{\dif u_s^{k}}{\dif s}\bigr|^2 \Bigr) \dif s \\
& + \frac{4}{n} \int_t^T \EE[ \psi_s^2 + L_g^2 |Y_s^{n,k}|^2 ] \dif \kappa_s .
\end{align*}

Note that the integral terms on the right-hand side of the inequality only involve $f(s,0,0,\delta_0)$, $\psi_s$, the derivative of $u_s^k$, and the second moments of $Y_s^{n,k}$ and $Z_s^{n,k}$. From the a priori estimates in Lemma \ref{lem:a_priori_penal}, the integrals of these terms over the interval $[0,T]$ can be uniformly controlled by a constant $C_k$ that depends on $k$ but is independent of $n$. Consequently, we can deduce that:
\[
\sup_{0\le{} t\le{} T} |y_t^{-}|^2 \le{} \frac{C_k}{n},
\]
and
\[
\int_0^T |y_t^{-}|^2 \dif (t + \EE[\kappa_t]) \le{} \frac{C_k}{n^2},
\]
This completes the proof of the approximation error estimates lemma. 
\end{proof}

\subsection{Cauchy Property of the Penalized Solution Sequence}

Having established the a priori bounds and the approximation error estimates for the penalized equations, we can now prove that for any fixed $k\ge{}1$, its solution sequence $(Y^{n,k},Z^{n,k})_{n={}1}^\infty$ forms a Cauchy sequence in the corresponding space.

\begin{lemma}[Cauchy Property of the Penalized Sequence]\label{lem:cauchy_penal}
Under Assumptions (A1)--(A5), for a given $k\ge{}1$, there exists a constant $C_k>0$ independent of $m$ and $n$ such that for any $m,n\ge{}1$, the solution sequence $(Y^{n,k},Z^{n,k})_{n={}1}^\infty$ of the penalized equation satisfies the following estimate:
\[
\mathbb{E}\left[\sup_{0\le{} t\le{} T}|Y_t^{m,k} - Y_t^{n,k}|^2\right] + \mathbb{E}\left[\int_0^T |Z_s^{m,k} - Z_s^{n,k}|^2\dif s\right] \le{} C_k\left(\frac{1}{\sqrt{m}}+\frac{1}{\sqrt{n}}\right).
\]
Therefore, this sequence forms a Cauchy sequence in $S^2\times H^2$.
\end{lemma}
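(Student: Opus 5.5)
We take differences of the two penalized equations with the same $k$ and apply It\^o's formula to $e^{\alpha t}|\delta Y_t|^2$, where $\delta Y:=Y^{m,k}-Y^{n,k}$, $\delta Z:=Z^{m,k}-Z^{n,k}$ and $\delta K:=K^{m,k}-K^{n,k}$. This is exactly the computation in \eqref{ito_diff} from the proof of Lemma~\ref{lem:stability}. The terminal values coincide, so $\delta\xi=0$, and the generator is the same $f$, so there is no $\delta f$ term.

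\textbf{Terms already handled.} The $f$-difference is bounded as in \eqref{est_f_diff}, using the Lipschitz property and $W_2(\nu^{m,k}_s,\nu^{n,k}_s)^2\le \EE|\delta Y_s|^2+\EE|\delta Z_s|^2$. The $g$-difference is non-positive by the strict monotonicity \eqref{est_g_diff}. Only the reflection cross term is new.

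\textbf{Reflection cross term.} Since $K^{m,k},K^{n,k}$ are deterministic, its expectation is $2\int_t^T e^{\alpha s}(y^m_s-y^n_s)\dif(K^{m,k}_s-K^{n,k}_s)$, where $y^j_s:=\EE[Y^{j,k}_s]-u^k_s$. Write $y=y^+-y^-$ and use $\dif K^{j,k}_s=j(y^j_s)^-\dif(s+\EE[\kappa_s])$. The diagonal pieces are $y^m\dif K^{m,k}=-m|(y^m)^-|^2\dif(\cdot)\le 0$, and likewise for $n$. One cross piece is $-y^n_s\dif K^{m,k}_s\le (y^n_s)^-\dif K^{m,k}_s$, and symmetrically for the other. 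Hence
\[
\EE\Bigl[2\int_t^T e^{\alpha s}\delta Y_s\dif\delta K_s\Bigr]\le 2e^{\alpha T}\Bigl(\int_0^T (y^n_s)^-\dif K^{m,k}_s+\int_0^T (y^m_s)^-\dif K^{n,k}_s\Bigr).
\]
For the first integral we use $\sup_s (y^n_s)^-\le \sqrt{C_k/n}$ from Lemma~\ref{lem:error_estimate_penal}. We also need $K^{m,k}_T\le C_k$ uniformly in $m$. This follows from Cauchy--Schwarz and the second estimate of Lemma~\ref{lem:error_estimate_penal}:
\[
K^{m,k}_T=m\int_0^T (y^m_s)^-\dif(s+\EE\kappa_s)\le m\bigl(T+\EE\kappa_T\bigr)^{1/2}\bigl(C_k/m^2\bigr)^{1/2}\le C_k .
\]
The first integral is therefore at most $C_k/\sqrt n$, and symmetrically the second is at most $C_k/\sqrt m$. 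This is the step that produces the rate $m^{-1/2}+n^{-1/2}$, and it is the main point of the proof.

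\textbf{Closing the estimate.} Choosing $\alpha\ge 2L_f+8L_f^2+5/4$ and taking expectations, as in Lemma~\ref{lem:stability}, gives
\[
\sup_t\EE|\delta Y_t|^2+\EE\int_0^T|\delta Z_s|^2\dif s\le C_k\bigl(m^{-1/2}+n^{-1/2}\bigr).
\]
To upgrade this to $\EE\sup_t|\delta Y_t|^2$, we return to the pathwise It\^o identity before taking expectations. The $f$ and $g$ terms are bounded as above, and the terms in $\EE|\delta Y_s|^2$ and $\EE|\delta Z_s|^2$ are controlled by what was just proved. Because $\delta K$ is deterministic but $\delta Y$ is random, the reflection term does not reduce to expectations pathwise. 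We bound it by
\[
2\sup_t\Bigl|\int_t^T e^{\alpha s}\delta Y_s\dif\delta K_s\Bigr|\le 2e^{\alpha T}\sup_s|\delta Y_s|\,\bigl(K^{m,k}_T+K^{n,k}_T\bigr).
\]
\textbf{Main obstacle.} This bound only gives something of order $\epsilon\,\EE\sup_s|\delta Y_s|^2+C_k$ via Young, which does not vanish as $m,n\to\infty$. The first fix to try is to split $\delta Y_s=(\delta Y_s-\EE\delta Y_s)+\EE\delta Y_s$. The mean part is deterministic and handled exactly as in the expectation computation above. The fluctuation part should be controlled through the martingale structure of $\delta Y-\EE\delta Y$, which does not see $K$ at all since $K$ is deterministic. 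Failing that, one can apply Doob/BDG to the martingale term $\int\delta Y\,\delta Z\dif B$ with the usual $\epsilon$-absorption, and treat the $K$-term by integration by parts against the deterministic $\delta K$ using the already-proven $L^2$ bounds. After this, the BDG estimate for the stochastic integral, absorbed as in Lemma~\ref{lem:priori}, yields the stated bound. The Cauchy property in $S^2\times H^2$ follows.
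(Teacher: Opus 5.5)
You handle the expectation-level estimate correctly, and your first two steps follow the paper's structure. The proof still falls short of the statement. The passage from $\sup_t\EE|\delta Y_t|^2$ to $\EE[\sup_t|\delta Y_t|^2]$, which is what the lemma asserts, is left as an open ``main obstacle'', and neither suggested fix is carried out. The fluctuation route is also harder than it looks. $\delta Y-\EE[\delta Y]$ is not a martingale: it carries the $\dif\kappa$-drift from $g(s,Y^{m,k}_s)-g(s,Y^{n,k}_s)$. Since $g$ is only monotone and continuous in $y$, not Lipschitz, that drift is not controlled by $|\delta Y|$ once it is no longer paired with $\delta Y$ itself.

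The obstacle comes from pulling $\sup_s|\delta Y_s|$ out of the integral. Keep the absolute value inside instead:
\[
\sup_{0\le t\le T}\Bigl|\int_t^T e^{\alpha s}\delta Y_s\dif\delta K_s\Bigr|\le\int_0^T e^{\alpha s}|\delta Y_s|\dif\bigl(K^{m,k}_s+K^{n,k}_s\bigr).
\]
Because this measure is deterministic, Tonelli turns the expectation into $\int_0^T e^{\alpha s}\EE|\delta Y_s|\dif(K^{m,k}_s+K^{n,k}_s)\le e^{\alpha T}(K^{m,k}_T+K^{n,k}_T)\sup_s(\EE|\delta Y_s|^2)^{1/2}$. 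This is controlled by the pointwise-in-time bound you already have and the uniform bound on $K^{j,k}_T$. The martingale term is then handled by BDG and absorption. This is exactly the paper's Step 3.

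Even with that repair, your argument does not give the stated rate. The square root above halves the exponent. From your intermediate bound $\sup_t\EE|\delta Y_t|^2\le C_k(m^{-1/2}+n^{-1/2})$ you would only get $\EE[\sup_t|\delta Y_t|^2]\le C_k(m^{-1/4}+n^{-1/4})$. That is enough for the Cauchy property, but weaker than the inequality claimed.

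The loss is in your cross-term bound $\int_0^T(y^n_s)^-\dif K^{m,k}_s\le\sup_s(y^n_s)^-\,K^{m,k}_T$. The paper writes this integral as $m\int_0^T(y^n_s)^-(y^m_s)^-\dif(s+\EE[\kappa_s])$. It then applies Cauchy--Schwarz, using the $L^2$ estimate of Lemma~\ref{lem:error_estimate_penal} for both factors, and obtains $m\cdot\frac{\sqrt{C_k}}{n}\cdot\frac{\sqrt{C_k}}{m}=\frac{C_k}{n}$. The resulting $O(1/m+1/n)$ bound at the expectation level becomes $O(m^{-1/2}+n^{-1/2})$ after the square root.
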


\begin{proof}
Denote $\delta Y_t :={} Y_t^{m,k} - Y_t^{n,k}$, $\delta Z_t :={} Z_t^{m,k} - Z_t^{n,k}$, and $\delta K_t :={} K_t^{m,k} - K_t^{n,k}$. Since the penalized equations share the same terminal condition $\xi$, we have $\delta Y_T ={} 0$.

\textbf{Step 1: Itô's formula and basic estimates for the difference process.}

Applying Itô's formula to $e^{\alpha t} |\delta Y_t|^2$ on $[t, T]$, we obtain:
\begin{align}
& e^{\alpha t} |\delta Y_t|^2 + \int_t^T e^{\alpha s} |\delta Z_s|^2 \dif s + \int_t^T \alpha e^{\alpha s} |\delta Y_s|^2 \dif s \nonumber \\
={}& 2 \int_t^T e^{\alpha s} \delta Y_s \bigl( f(s, Y_s^{m,k}, Z_s^{m,k}, \nu_s^{m,k}) - f(s, Y_s^{n,k}, Z_s^{n,k}, \nu_s^{n,k}) \bigr) \dif s \nonumber \\
 & + 2 \int_t^T e^{\alpha s} \delta Y_s \bigl( g(s, Y_s^{m,k}) - g(s, Y_s^{n,k}) \bigr) \dif \kappa_s + 2 \int_t^T e^{\alpha s} \delta Y_s \dif \delta K_s \nonumber \\
 & - 2 \int_t^T e^{\alpha s} \delta Y_s \delta Z_s \dif B_s .
\label{ito_delta_Y_m_n}
\end{align}

First, we analyze the difference term of $g$. From the monotonicity condition ($\beta < 0$) satisfied by $g$ in Assumption (A2), we have:
\[
2 \int_t^T e^{\alpha s} \delta Y_s (g(s, Y_s^{m,k}) - g(s, Y_s^{n,k})) \dif \kappa_s \le{} 2 \beta \int_t^T e^{\alpha s} |\delta Y_s|^2 \dif \kappa_s \le{} 0 .
\]

Next, we analyze the difference term of $f$. From the Lipschitz condition in Assumption (A1) and Young's inequality ($2ab \le{} \epsilon a^2 + b^2/\epsilon$), we get:
\begin{align}
& 2 \delta Y_s \bigl( f(s, Y_s^{m,k}, Z_s^{m,k}, \nu_s^{m,k}) - f(s, Y_s^{n,k}, Z_s^{n,k}, \nu_s^{n,k}) \bigr) \nonumber \\
\le{} & 2 L_f |\delta Y_s| \bigl( |\delta Y_s| + |\delta Z_s| + W_2(\nu_s^{m,k}, \nu_s^{n,k}) \bigr) \nonumber \\
\le{} & 2 L_f |\delta Y_s|^2 + \frac14 |\delta Z_s|^2 + 4 L_f^2 |\delta Y_s|^2 + \frac14 W_2(\nu_s^{m,k}, \nu_s^{n,k})^2 + 4 L_f^2 |\delta Y_s|^2 \nonumber \\
={} & (2 L_f + 8 L_f^2) |\delta Y_s|^2 + \frac14 |\delta Z_s|^2 + \frac14 W_2(\nu_s^{m,k}, \nu_s^{n,k})^2 .
\end{align}

Taking the expectation on both sides of \eqref{ito_delta_Y_m_n}. Note that for the deterministic process $K$, we have $\EE[\int_t^T e^{\alpha s} \delta Y_s \dif \delta K_s] ={} \int_t^T e^{\alpha s} \EE[\delta Y_s] \dif \delta K_s$; moreover, by the properties of the Wasserstein distance, $W_2(\nu_s^{m,k}, \nu_s^{n,k})^2 \le{} \EE[ |\delta Y_s|^2 + |\delta Z_s|^2]$. Substituting the estimates for the difference terms of $f$ and $g$, we obtain:
\begin{align}
& \EE[ e^{\alpha t} |\delta Y_t|^2 ] + \EE[\int_t^T e^{\alpha s} |\delta Z_s|^2 \dif s] + \EE[\int_t^T \alpha e^{\alpha s} |\delta Y_s|^2 \dif s] \nonumber \\
\le{} & \EE[\int_t^T e^{\alpha s} (2 L_f + 8 L_f^2 + \tfrac14) |\delta Y_s|^2 \dif s] + \tfrac12 \EE[\int_t^T e^{\alpha s} |\delta Z_s|^2 \dif s] \nonumber \\
 & + 2 \int_t^T e^{\alpha s} \EE[\delta Y_s] \dif \delta K_s .
\end{align}

Choosing a sufficiently large constant $\alpha > 2 L_f + 8 L_f^2 + \frac14$ to eliminate the terms involving $|\delta Y_s|^2$ on both sides of the above inequality, we rearrange it to yield:
\begin{equation}\label{basic_est_exp}
\EE[ e^{\alpha t} |\delta Y_t|^2 ] + \frac12 \EE[ \int_t^T e^{\alpha s} |\delta Z_s|^2 \dif s ] \le{} 2 \int_t^T e^{\alpha s} \EE[\delta Y_s] \dif \delta K_s .
\end{equation}

\textbf{Step 2: Boundedness estimates for the reflection cross-terms.}

To prove the Cauchy property of the sequence, we need to estimate the reflection cross-term $2 \int_t^T e^{\alpha s} \EE[\delta Y_s] \dif \delta K_s$. We will derive a boundedness estimate utilizing the basic properties of the negative part and the Cauchy-Schwarz inequality.

Denote $y^{n,k}_s :={} \EE[Y^{n,k}_s]$ and $v^{n,k}_s :={} (y^{n,k}_s - u^{k}_s)^-$. By the definition of the penalty term, $\dif K^{n,k}_s ={} n v^{n,k}_s \dif (s + \EE[\kappa_s]) \ge{} 0$. Expanding the reflection term, we have:
\begin{align}
& 2 \int_t^T e^{\alpha s} \EE[\delta Y_s] \dif \delta K_s \nonumber \\
={} & 2 \int_t^T e^{\alpha s} (y^{m,k}_s - y^{n,k}_s) \dif (K^{m,k}_s - K^{n,k}_s) \nonumber \\
={} & 2 \int_t^T e^{\alpha s} (y^{m,k}_s - u^{k}_s) \dif K^{m,k}_s + 2 \int_t^T e^{\alpha s} (y^{n,k}_s - u^{k}_s) \dif K^{n,k}_s \nonumber \\
 & - 2 \int_t^T e^{\alpha s} (y^{m,k}_s - u^{k}_s) \dif K^{n,k}_s - 2 \int_t^T e^{\alpha s} (y^{n,k}_s - u^{k}_s) \dif K^{m,k}_s .
\end{align}

Note that, according to the definition of the negative part, $(y^{n,k}_s - u^{k}_s) \dif K^{n,k}_s ={} (y^{n,k}_s - u^{k}_s) n (y^{n,k}_s - u^{k}_s)^- \dif (s + \EE[\kappa_s]) ={} -n |v^{n,k}_s|^2 \dif (s + \EE[\kappa_s]) \le{} 0$. Therefore, the first two terms are non-positive. Dropping them yields the upper bound:
\[
2 \int_t^T e^{\alpha s} \EE[\delta Y_s] \dif \delta K_s \le{} - 2 \int_t^T e^{\alpha s} (y^{m,k}_s - u^{k}_s) \dif K^{n,k}_s - 2 \int_t^T e^{\alpha s} (y^{n,k}_s - u^{k}_s) \dif K^{m,k}_s .
\]

For the remaining cross-terms, since $-x \le{} x^-$ holds for any real number $x$, i.e., $-(y^{m,k}_s - u^{k}_s) \le{} (y^{m,k}_s - u^{k}_s)^- ={} v^{m,k}_s$, and $\dif K^{n,k}_s \ge{} 0$, we have:
\[
- 2 \int_t^T e^{\alpha s} (y^{m,k}_s - u^{k}_s) \dif K^{n,k}_s \le{} 2 \int_t^T e^{\alpha s} v^{m,k}_s \dif K^{n,k}_s ={} 2 n \int_t^T e^{\alpha s} v^{m,k}_s v^{n,k}_s \dif (s + \EE[\kappa_s]) .
\]

Similarly, $- 2 \int_t^T e^{\alpha s} (y^{n,k}_s - u^{k}_s) \dif K^{m,k}_s \le{} 2 m \int_t^T e^{\alpha s} v^{n,k}_s v^{m,k}_s \dif (s + \EE[\kappa_s])$. Adding these two terms and using the Cauchy-Schwarz inequality, we obtain:
\begin{align}
& 2 \int_t^T e^{\alpha s} \EE[\delta Y_s] \dif \delta K_s \nonumber \\
\le{} & 2(m+n) \int_t^T e^{\alpha s} v^{m,k}_s v^{n,k}_s \dif (s + \EE[\kappa_s]) \nonumber \\
\le{} & 2(m+n) e^{\alpha T} \Bigl( \int_0^T |v^{m,k}_s|^2 \dif (s + \EE[\kappa_s]) \Bigr)^{1/2} \Bigl( \int_0^T |v^{n,k}_s|^2 \dif (s + \EE[\kappa_s]) \Bigr)^{1/2} .
\label{reflection_est}
\end{align}

According to Lemma~\ref{lem:error_estimate_penal}, there exists a constant $C_k > 0$ such that
\[
\int_0^T |v^{n,k}_s|^2 \dif (s + \EE[\kappa_s]) \le{} \frac{C_k}{n^2}.
\]
Substituting this into the above inequality yields:
\[
2 \int_t^T e^{\alpha s} \EE[\delta Y_s] \dif \delta K_s \le{} 2(m+n) e^{\alpha T} \frac{\sqrt{C_k}}{m} \frac{\sqrt{C_k}}{n} ={} 2 e^{\alpha T} C_k \Bigl(\frac1m + \frac1n\Bigr).
\]

\textbf{Step 3: Handling the martingale term and estimating the order of convergence.}

Returning to \eqref{ito_delta_Y_m_n}, to prove the Cauchy property of the sequence $(Y^{n,k}, Z^{n,k})_{n={}1}^\infty$ in $S^2 \times H^2$, we need to take the supremum first and then the expectation on both sides of the equation.

Combining \eqref{basic_est_exp} and \eqref{reflection_est}, setting $t={}0$ directly yields the uniform estimate for the integral of $Z$:
\begin{equation}\label{z_est}
\EE[ \int_0^T e^{\alpha s} |\delta Z_s|^2 \dif s ] \le{} 4 e^{\alpha T} C_k \Bigl(\frac1m + \frac1n\Bigr).
\end{equation}

Similarly, taking the expectation for any $t \in [0, T]$, we have:
\[
\sup_{0 \le{} t \le{} T} \EE[ e^{\alpha t} |\delta Y_t|^2 ] \le{} 2 e^{\alpha T} C_k \Bigl(\frac1m + \frac1n\Bigr).
\]

Next, we estimate the supremum of the reflection term. Expanding $\delta K_s ={} K_s^{m,k} - K_s^{n,k}$ and using the absolute value inequality:
\[
\EE\Bigl[ \sup_{0 \le{} t \le{} T} 2 \int_t^T e^{\alpha s} \delta Y_s \dif \delta K_s \Bigr] \le{} \EE\Bigl[ 2 \int_0^T e^{\alpha s} |\delta Y_s| \dif K^{m,k}_s \Bigr] + \EE\Bigl[ 2 \int_0^T e^{\alpha s} |\delta Y_s| \dif K^{n,k}_s \Bigr].
\]

Since the penalty term $K^{m,k}$ is a deterministic process, we can pass the expectation directly to the integrand. Using the inequality $\EE |\delta Y_s| \le{} (\EE |\delta Y_s|^2)^{1/2}$, we have:
\begin{align}
\EE\Bigl[ 2 \int_0^T e^{\alpha s} |\delta Y_s| \dif K^{m,k}_s \Bigr]
&={} 2 \int_0^T e^{\alpha s} \EE |\delta Y_s| \dif K^{m,k}_s \nonumber \\
&\le{} 2 K^{m,k}_T \sup_{0 \le{} t \le{} T} \bigl( e^{\alpha t} \EE |\delta Y_t| \bigr) \nonumber \\
&\le{} 2 K^{m,k}_T e^{\alpha T/2} \sup_{0 \le{} t \le{} T} \bigl( \EE[ e^{\alpha t} |\delta Y_t|^2 ] \bigr)^{1/2}.
\end{align}

By the definition of the penalty process and the Cauchy-Schwarz inequality, combined with Lemma~\ref{lem_error_estimate_penal}, we have:
\begin{align}
K^{m,k}_T &={} m \int_0^T (\EE[Y_s^{m,k}] - u^{k}_s)^- \dif (s + \EE[\kappa_s]) \nonumber \\
&\le{} m (T + \EE[\kappa_T])^{1/2} \Bigl( \int_0^T \bigl| (\EE[Y_s^{m,k}] - u^{k}_s)^- \bigr|^2 \dif (s + \EE[\kappa_s]) \Bigr)^{1/2} \nonumber \\
&\le{} m (T + \EE[\kappa_T])^{1/2} \frac{\sqrt{C_k}}{m} ={} \sqrt{C_k (T + \EE[\kappa_T])}.
\end{align}

Substituting this upper bound and the aforementioned estimate of $\sup_t \EE[ e^{\alpha t} |\delta Y_t|^2 ]$, and using the inequality $\sqrt{a+b} \le{} \sqrt{a} + \sqrt{b}$, we obtain:
\begin{align}
\EE\Bigl[ 2 \int_0^T e^{\alpha s} |\delta Y_s| \dif K^{m,k}_s \Bigr]
&\le{} 2 \sqrt{C_k (T + \EE[\kappa_T])} e^{\alpha T/2} \bigl( 2 e^{\alpha T} C_k (1/m + 1/n) \bigr)^{1/2} \nonumber \\
&\le{} C'_k \Bigl( \frac{1}{\sqrt{m}} + \frac{1}{\sqrt{n}} \Bigr),
\end{align}
where $C'_k$ is a constant independent of $m$ and $n$. The same estimate holds analogously for $K^{n,k}_s$, so the expected supremum of the reflection term is bounded by $O(1/\sqrt{m} + 1/\sqrt{n})$.

Finally, for the martingale term, applying the BDG inequality and Young's inequality, there exists a universal constant $C > 0$ such that:
\begin{align}
& \EE\Bigl[ \sup_{0 \le{} t \le{} T} \Bigl| 2 \int_t^T e^{\alpha s} \delta Y_s \delta Z_s \dif B_s \Bigr| \Bigr] \nonumber \\
\le{} & C \EE\Bigl[ \Bigl( \int_0^T e^{2\alpha s} |\delta Y_s|^2 |\delta Z_s|^2 \dif s \Bigr)^{1/2} \Bigr] \nonumber \\
\le{} & C \EE\Bigl[ \sup_{0 \le{} t \le{} T} ( e^{\alpha s/2} |\delta Y_s| ) \Bigl( \int_0^T e^{\alpha s} |\delta Z_s|^2 \dif s \Bigr)^{1/2} \Bigr] \nonumber \\
\le{} & \frac12 \EE\Bigl[ \sup_{0 \le{} t \le{} T} e^{\alpha t} |\delta Y_t|^2 \Bigr] + \frac{C^2}{2} \EE\Bigl[ \int_0^T e^{\alpha s} |\delta Z_s|^2 \dif s \Bigr].
\end{align}

The first term on the right-hand side can be moved and absorbed into the left-hand side of the equation, and the second term is bounded by $O(1/m + 1/n)$ according to \eqref{z_est}.

Substituting all the above estimates for the expected suprema back into \eqref{ito_delta_Y_m_n} and absorbing the term containing $\sup_{0 \le{} t \le{} T} |\delta Y_t|^2$ on the right-hand side, we deduce that there exists a constant $C > 0$ depending only on the system parameters and $k$, such that:
\[
\frac12 \EE\Bigl[ \sup_{0 \le{} t \le{} T} e^{\alpha t} |\delta Y_t|^2 \Bigr] \le{} C \Bigl( \frac{1}{\sqrt{m}} + \frac{1}{\sqrt{n}} + \frac1m + \frac1n \Bigr) \le{} 2C \Bigl( \frac{1}{\sqrt{m}} + \frac{1}{\sqrt{n}} \Bigr).
\]

Combined with the estimate for $\delta Z$, we finally conclude that the sequence of solutions to the penalized equation satisfies the following strong convergence estimate:
\[
\EE\Bigl[ \sup_{0 \le{} t \le{} T} |Y_t^{m,k} - Y_t^{n,k}|^2 \Bigr] + \EE\Bigl[ \int_0^T |Z_s^{m,k} - Z_s^{n,k}|^2 \dif s \Bigr] \le{} C \Bigl( \frac{1}{\sqrt{m}} + \frac{1}{\sqrt{n}} \Bigr).
\]

This proves that the sequence forms a Cauchy sequence in $S^2 \times H^2$.
\end{proof}

\subsection{Existence of Solutions to the Approximating Equation and the Original Equation}

Utilizing the Cauchy property proved in the previous lemma, we can pass to the limit to first construct the solution of the reflected equation corresponding to the smooth approximating obstacle $u^k$, and then approximate the solution of the original equation \eqref{equation} using a similar approach.

\begin{proposition}[Existence of the Solution to the Approximating Equation]\label{prop:exist_uk}
Under Assumptions (A1)--(A5), for any given $k\ge{}1$, the mean-reflected backward stochastic differential equation corresponding to the approximating boundary $u^k$ admits at least one solution $(Y^k,Z^k,K^k)$. That is, it satisfies the original dynamic equation, the reflection condition $\mathbb{E}[Y_t^k]\ge{} u_t^k$, and the Skorokhod flatness condition $\int_0^T (\mathbb{E}[Y_t^k]-u_t^k)\dif K_t^k={}0$, where $K^k$ is a deterministic, continuous, and non-decreasing process.
\end{proposition}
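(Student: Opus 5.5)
For fixed $k$, the plan is to pass to the limit $n\to\infty$ in the penalized equation, using Lemma~\ref{lem:cauchy_penal} and Lemma~\ref{lem:error_estimate_penal}.

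\textbf{Step 1: the limits $Y^k$ and $Z^k$.} By Lemma~\ref{lem:cauchy_penal}, $(Y^{n,k},Z^{n,k})_n$ is Cauchy in $S^2\times H^2$. It therefore converges to some $(Y^k,Z^k)\in S^2\times H^2$, with $Y^k$ continuous and adapted. Since $W_2(\nu^{n,k}_s,\nu^k_s)^2\le \EE[|Y^{n,k}_s-Y^k_s|^2+|Z^{n,k}_s-Z^k_s|^2]$, assumption (A1) gives
\[
\int_0^T f(s,Y^{n,k}_s,Z^{n,k}_s,\nu^{n,k}_s)\dif s\to\int_0^T f(s,Y^k_s,Z^k_s,\nu^k_s)\dif s
\]
in $L^2$, uniformly in $t$. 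The stochastic integrals converge in $S^2$ by the BDG inequality.

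For the $g$-term, the bound $|g(s,y_1)-g(s,y_2)|$ is not directly Lipschitz; only continuity and linear growth are available. I will therefore take an a.s.\ uniformly convergent subsequence of $Y^{n,k}$. Using $|g|\le\psi+L_g|y|$, the uniform $\dif\kappa$-bound $\EE\int e^{\alpha s}|Y^{n,k}_s|^2\dif\kappa_s\le C_k$ from Lemma~\ref{lem:a_priori_penal}, and dominated convergence pathwise (the sup of $|Y^{n,k}|$ along the subsequence is integrable after a further subsequence), I get $\int_t^T g(s,Y^{n,k}_s)\dif\kappa_s\to\int_t^T g(s,Y^k_s)\dif\kappa_s$ in probability, uniformly in $t$.

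\textbf{Step 2: the limit $K^k$.} Define
\[
K^k_t:=Y^k_0-Y^k_t-\int_0^t f\dif s-\int_0^t g\dif\kappa_s+\int_0^t Z^k_s\dif B_s .
\]
Then $K^{n,k}_t\to K^k_t$ in probability, uniformly in $t$. Each $K^{n,k}$ is deterministic, continuous and non-decreasing, and $K^{n,k}_T\le\sqrt{C_k(T+\EE[\kappa_T])}$ by the bound in the proof of Lemma~\ref{lem:cauchy_penal}. Hence $K^k$ is deterministic (a limit in probability of constants), continuous, non-decreasing, satisfies $K^k_0=0$, and belongs to $A_d^2$. The triplet $(Y^k,Z^k,K^k)$ therefore satisfies the dynamic equation.

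\textbf{Step 3: the constraint and the Skorokhod condition.} The constraint follows from Lemma~\ref{lem:error_estimate_penal}: $\sup_t(\EE[Y^{n,k}_t]-u^k_t)^-\le\sqrt{C_k/n}\to0$, while $\EE[Y^{n,k}_t]\to\EE[Y^k_t]$ uniformly. Hence $\EE[Y^k_t]\ge u^k_t$.

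For flatness, I will write $h^n_t:=\EE[Y^{n,k}_t]-u^k_t$ and $h_t:=\EE[Y^k_t]-u^k_t$, with $h^n\to h$ uniformly. Then
\[
0\le\int_0^T h_t\dif K^k_t=\lim_n\int_0^T h^n_t\dif K^{n,k}_t .
\]
Justifying this limit is the main obstacle, since it requires weak convergence of the measures $\dif K^{n,k}\to\dif K^k$. Uniform convergence of the continuous distribution functions, together with the uniform bound on total mass, gives this weak convergence. Paired with uniform convergence of the continuous integrands $h^n\to h$, it yields $\int h^n\dif K^{n,k}\to\int h\dif K^k$.

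Finally, by \eqref{flatness_nk}, $\int_0^T h^n_t\dif K^{n,k}_t=-n\int_0^T|(h^n_t)^-|^2\dif(t+\EE[\kappa_t])\le0$. Combined with $h\ge0$, this forces $\int_0^T(\EE[Y^k_t]-u^k_t)\dif K^k_t=0$, which completes the proof.
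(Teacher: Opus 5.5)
Your proposal is correct and follows the paper's proof essentially step for step: you take the limit of $(Y^{n,k},Z^{n,k})$ from Lemma~\ref{lem:cauchy_penal}, define $K^k$ through the forward form of the equation and show it is deterministic and non-decreasing as a limit of the $K^{n,k}$, get the constraint from Lemma~\ref{lem:error_estimate_penal}, and get flatness from weak convergence of $\dif K^{n,k}$ together with $\int_0^T h^n_t\dif K^{n,k}_t\le 0$; your a.s.-subsequence treatment of the $g$-term and your uniform convergence $K^{n,k}\to K^k$ are, if anything, more careful than the paper's unproved claim of $S^2$ convergence. One caveat is shared with the paper rather than specific to your argument: Lemma~\ref{lem:error_estimate_penal} uses $y_T^-=0$, i.e.\ $\EE[\xi]\ge u^k_T$, which (A3) together with the mollification does not guarantee, so replacing $u^k$ by $u^k-\sup_{0\le t\le T}|u^k_t-u_t|$ (which lies below $u$) is needed for the constraint at $t=T$ to hold.
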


\begin{proof}
From the preceding lemma on the Cauchy property of the penalized sequence, it follows that for a given $k \ge{} 1$, as $n \to \infty$, the sequence $(Y^{n,k}, Z^{n,k})$ converges in the corresponding space. We denote its limit by $(Y^k, Z^k)$, that is:
\[
\lim_{n\to\infty}\Bigl\{ \EE\Bigl[ \sup_{0\le{} t\le{} T} \bigl|Y_t^{n,k} - Y_t^k\bigr|^2\Bigr] + \EE\Bigl[ \int_0^T \bigl|Z_s^{n,k} - Z_s^k\bigr|^2 \dif s\Bigr] \Bigr\} ={} 0 .
\]

Next, we explicitly define $K^k$ through the original equation. From the penalized equation, $K^{n,k}$ can be written as:
\[
K_t^{n,k} ={} Y_0^{n,k} - Y_t^{n,k} - \int_0^t f(s, Y_s^{n,k}, Z_s^{n,k}, \nu_s^{n,k})\dif s - \int_0^t g(s, Y_s^{n,k})\dif \kappa_s + \int_0^t Z_s^{n,k}\dif B_s .
\]
Since $f$ satisfies the Lipschitz condition, and by Assumption (A2), $y \mapsto g(t,y)$ is continuous for all $t\le{} T$ and satisfies the linear growth condition, combining this with the convergence of $(Y^{n,k}, Z^{n,k})$ to $(Y^k, Z^k)$ in $S^2 \times H^2$, the dominated convergence theorem implies that all terms on the right-hand side of the above equation converge in $S^2$. Therefore, we naturally define the limit process $K^k$ as:
\[
K_t^k :={} Y_0^k - Y_t^k - \int_0^t f(s, Y_s^k, Z_s^k, \nu_s^k)\dif s - \int_0^t g(s, Y_s^k)\dif \kappa_s + \int_0^t Z_s^k\dif B_s ,
\]
This is equivalent to stating that $(Y^k, Z^k, K^k)$ satisfies the dynamic equation of the original BSDE.

\textbf{Step 1: Verification of the determinism and monotonicity of $K^k$}

Since $K^{n,k}$ is a deterministic process defined via expectations for any $n\ge{} 1$, its limit $K^k$ in the $L^2$-sense must also be a deterministic process.
Furthermore, since $K_t^{n,k} ={} n\int_0^t (\EE[Y_s^{n,k}] - u^k_s)^- \dif (s + \EE[\kappa_s])$ is monotonically non-decreasing with respect to $t$, and $K_0^{n,k} ={} 0$, taking the limit shows that $K^k$ is also a continuous and monotonically non-decreasing process with respect to $t$, satisfying $K_0^k ={} 0$.

\textbf{Step 2: Verification of the reflection condition}

According to the approximation error estimates in Lemma~\ref{lem:error_estimate_penal}, we have:
\[
\sup_{0\le{} t\le{} T} \bigl|(\EE[Y_t^{n,k}] - u_t^k)^-\bigr|^2 \le{} \frac{C_k}{n}.
\]
Letting $n\to\infty$, since $\EE[Y_t^{n,k}]$ converges uniformly to $\EE[Y_t^k]$ on $[0,T]$, we obtain:
\[
\sup_{0\le{} t\le{} T} \bigl|(\EE[Y_t^k] - u_t^k)^-\bigr|^2 ={} 0 .
\]
This indicates that for all $0\le{} t\le{} T$, $\EE[Y_t^k] \ge{} u_t^k$, which means the reflection condition holds.

\textbf{Step 3: Verification of the Skorokhod condition}

For the penalized equation, since $\dif K_t^{n,k} ={} n(\EE[Y_t^{n,k}] - u_t^k)^- \dif (t + \EE[\kappa_t])$, we have:
\[
\int_0^T (\EE[Y_t^{n,k}] - u_t^k) \dif K_t^{n,k} ={} -n \int_0^T \bigl|(\EE[Y_t^{n,k}] - u_t^k)^-\bigr|^2 \dif (t + \EE[\kappa_t]) \le{} 0 .
\]
As $n\to\infty$, since $\EE[Y_t^{n,k}]$ converges uniformly to $\EE[Y_t^k]$ on $[0,T]$, and $K_t^{n,k}$ converges to the monotonically continuous process $K_t^k$, taking the limit by the property of weak convergence of measures yields:
\[
\int_0^T (\EE[Y_t^k] - u_t^k) \dif K_t^k ={} \lim_{n\to\infty} \int_0^T (\EE[Y_t^{n,k}] - u_t^k) \dif K_t^{n,k} \le{} 0 .
\]
On the other hand, since it has been proven that $\EE[Y_t^k] \ge{} u_t^k$, and $K^k$ is a monotonically non-decreasing process (i.e., $\dif K_t^k \ge{} 0$), we must have:
\[
\int_0^T (\EE[Y_t^k] - u_t^k) \dif K_t^k \ge{} 0 .
\]
Combining the above two inequalities, we obtain the Skorokhod condition:
\[
\int_0^T (\EE[Y_t^k] - u_t^k) \dif K_t^k ={} 0 .
\]
This proves that $(Y^k, Z^k, K^k)$ satisfies all the required conditions of the solution corresponding to the smooth boundary $u^k$.
\end{proof}

Having established the existence of the solution to the approximating equation, we can finally present the existence of the solution to the original equation \eqref{equation}.

\begin{proposition}[Existence of the Solution]\label{prop:exist_u}
Under Assumptions (A1)--(A5), equation \eqref{equation} admits a solution $(Y,Z,K)\in S^2\times H^2\times A^2_d$.
\end{proposition}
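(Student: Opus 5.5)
The plan is to pass to the limit $k\to\infty$ in the family $(Y^k,Z^k,K^k)$ given by Proposition~\ref{prop:exist_uk}, mirroring the structure of the penalization argument. Each $(Y^k,Z^k,K^k)$ solves the mean-reflected equation with the same data $\xi,f,g,\kappa$ but with obstacle $u^k$. The first step is therefore a stability estimate \emph{with respect to the obstacle}, which will show that $(Y^k,Z^k)$ is Cauchy in $S^2\times H^2$ and that $K^k$ converges uniformly on $[0,T]$.

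\textbf{Step 1: Cauchy estimate in $k$.} Set $\delta Y=Y^k-Y^l$, $\delta Z=Z^k-Z^l$, $\delta K=K^k-K^l$. I would apply It\^o's formula to $e^{\alpha t}|\delta Y_t|^2$ exactly as in Lemma~\ref{lem:stability}. The $f$-terms are absorbed by choosing $\alpha$ large, and the $g$-term is nonpositive since $\beta<0$. The only new feature is the reflection cross term. Repeating the four-term decomposition of Lemma~\ref{lem:stability}, but inserting $u^k$ and $u^l$ in place of $u$, gives
\[
\int_t^T e^{\alpha s}\EE[\delta Y_s]\dif \delta K_s\le \int_t^T e^{\alpha s}(u^l_s-u^k_s)\dif K^k_s+\int_t^T e^{\alpha s}(u^k_s-u^l_s)\dif K^l_s\le e^{\alpha T}\|u^k-u^l\|_\infty (K^k_T+K^l_T).
\]
For the $\EE\sup$ part, I would argue as in Step~3 of Lemma~\ref{lem:cauchy_penal}: bound the supremum of the reflection term by $\sup_t(\EE|\delta Y_t|^2)^{1/2}(K^k_T+K^l_T)$, and treat the martingale term by BDG.

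\textbf{Step 2: uniform bound on $K^k_T$ (the main obstacle).} The constants $C_k$ in Lemmas~\ref{lem:a_priori_penal}--\ref{lem:error_estimate_penal} depend on $k$ through $\frac{\dif u^k}{\dif t}$, so they cannot be reused; I need $\sup_k K^k_T<\infty$ directly. My first attempt would use the representation
\[
K^k_t=\EE[Y^k_0]-\EE[Y^k_t]-\int_0^t\EE[f(s,Y^k_s,Z^k_s,\nu^k_s)]\dif s-\EE\int_0^t g(s,Y^k_s)\dif\kappa_s
\]
from the proof of Proposition~\ref{prop:uniqueness}. It gives $K^k_T\le C\big(\sup_t\EE|Y^k_t|+\|Y^k\|+\|Z^k\|+\text{data}\big)$, where I would control the $\dif\kappa$ integral by the linear growth of $g$ and Cauchy--Schwarz, using $\EE[e^{\mu\kappa_T}]<\infty$. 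Combining this with Lemma~\ref{lem:priori}, whose right-hand side contains $K_T^2$, leads to a circular inequality. I would break the circularity with the sharper expectation estimate \eqref{est_k}: $\EE\int e^{\alpha s}Y_s\dif K^k_s=\int e^{\alpha s}u^k_s\dif K^k_s\le e^{\alpha T}\|u^k\|_\infty K^k_T$, and $\|u^k\|_\infty\le\|u\|_\infty$ uniformly in $k$. This yields $\sup_t\EE|Y^k_t|^2+\EE\int|Z^k|^2\le C(1+K^k_T)$, so $K^k_T\le C(1+\sqrt{K^k_T})$ and $K^k_T$ is bounded uniformly in $k$.

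If this bootstrap fails, my fallback is a comparison argument. I would compare $\EE[Y^k]$ with the unreflected solution, where $K^k$ is the minimal push, taking $K^k_T\lesssim\sup_t(u^k_t-\EE[\bar Y_t])^+$ plus Lipschitz corrections, via a Gronwall step on the deterministic function $\EE[Y^k]$.

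\textbf{Step 3: identification of the limit.} With $K^k_T$ bounded, Step~1 gives an $S^2\times H^2$ Cauchy bound of order $\|u^k-u^l\|_\infty^{1/2}\to0$. I would call the limit $(Y,Z)$ and define $K$ through the equation, as in Proposition~\ref{prop:exist_uk}; $K^k\to K$ uniformly on $[0,T]$, so $K$ is deterministic, continuous, non-decreasing, with $K_0=0$. The constraint $\EE[Y_t]\ge u_t$ follows from $\EE[Y^k_t]\ge u^k_t$ together with uniform convergence of both sides. For the Skorokhod condition, $\int(\EE[Y^k_t]-u^k_t)\dif K^k_t=0$ passes to the limit, since the integrands converge uniformly and $\dif K^k\to\dif K$ weakly with bounded total mass. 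This gives $\int_0^T(\EE[Y_t]-u_t)\dif K_t=0$, and $(Y,Z,K)\in S^2\times H^2\times A^2_d$ is a solution.
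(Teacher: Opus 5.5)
Your proposal is correct and follows essentially the same route as the paper. You obtain a $k$-uniform bound on $K^k_T$ by combining two ingredients: the expectation-level It\^o estimate, in which $\EE[Y^k_s]=u^k_s$ on the support of $\dif K^k$ and $\|u^k\|_\infty\le\|u\|_\infty$ make the reflection term linear in $K^k_T$, and the expectation representation of $K^k_T$. You then use the four-term cross decomposition to get the obstacle-mismatch bound $e^{\alpha T}\|u^k-u^l\|_\infty(K^k_T+K^l_T)$, and pass to the limit using uniform convergence of $\EE[Y^k]$ and $u^k$ together with weak convergence of $\dif K^k$ with bounded mass.

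A few minor points:
\begin{itemize}
\item The signs in your displayed cross-term bound are swapped. It should read $\int_t^T e^{\alpha s}(u^k_s-u^l_s)\dif K^k_s+\int_t^T e^{\alpha s}(u^l_s-u^k_s)\dif K^l_s$; this is harmless for the final estimate.
\item In the bootstrap, control the $\dif\kappa$ part of $K^k_T$ through $\EE\int_0^T|Y^k_s|^2\dif\kappa_s$. The strict monotonicity ($\beta<0$) places this quantity on the left of the expectation estimate. Do not use $\EE[\sup_t|Y^k_t|\,\kappa_T]$, which would reintroduce the $S^2$ norm.
\item Your rate $\|u^k-u^l\|_\infty^{1/2}$ for the $\EE\sup$ part is what the argument actually delivers; the paper states a linear rate, and either is enough.
\end{itemize}
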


\begin{proof}
In this section, we obtain the existence of the solution by proving the Cauchy property of the approximating sequence $(Y^k, Z^k, K^k)_{k={}1}^\infty$.

First, we provide uniform a priori estimates for the sequence $(Y^k, Z^k, K^k)$. For each $k \ge{} 1$, from the previous subsection, $(Y^k, Z^k, K^k)$ satisfies the original dynamic equation, along with $\EE[Y^k_t] \ge{} u^k_t$ and $\int_0^T (\EE[Y^k_t] - u^k_t) \dif K^k_t ={} 0$. Applying Itô's formula to $e^{\alpha t} \lvert Y^k_t\rvert^2$ and taking the expectation, we have:
\begin{align*}
  &\EE[e^{\alpha t} \lvert Y^k_t\rvert^2] + \EE\int_t^T \alpha e^{\alpha s} \lvert Y^k_s\rvert^2 \dif s + \EE\int_t^T e^{\alpha s} \lvert Z^k_s\rvert^2 \dif s \\
  &={} \EE[e^{\alpha T} \lvert\xi\rvert^2] + 2\EE\int_t^T e^{\alpha s} Y^k_s f(s, Y^k_s, Z^k_s, \nu^k_s) \dif s \\
  &\quad + 2\EE\int_t^T e^{\alpha s} Y^k_s g(s, Y^k_s) \dif \kappa_s + 2\int_t^T e^{\alpha s} \EE[Y^k_s] \dif K^k_s.
\end{align*}

For the reflection term, due to the flatness condition $\int_0^T (\EE[Y^k_s] - u^k_s) \dif K^k_s ={} 0$ and the fact that $K^k$ is a monotonically non-decreasing deterministic process, we have:
\begin{equation*}
  2\int_t^T e^{\alpha s} \EE[Y^k_s] \dif K^k_s ={} 2\int_t^T e^{\alpha s} u^k_s \dif K^k_s \le{} 2 e^{\alpha T} \sup_{0\le{} s\le{} T} \lvert u^k_s\rvert \, K^k_T.
\end{equation*}

Since $u^k$ converges uniformly to $u$ on $[0,T]$, there exists a constant $M>0$ such that $\sup_{0\le{} s\le{} T} \lvert u^k_s\rvert \le{} M$ for all $k\ge{}1$. On the other hand, based on the dynamic form of the equation, taking the expectation of $K^k_T$ and using the absolute value inequality, we obtain:
\begin{align*}
  K^k_T &={} \EE[Y^k_0] - \EE[\xi] - \EE\int_0^T f(s, Y^k_s, Z^k_s, \nu^k_s) \dif s - \EE\int_0^T g(s, Y^k_s) \dif \kappa_s \\
  &\le{} \lvert\EE[Y^k_0]\rvert + \EE\lvert\xi\rvert + \EE\int_0^T \lvert f(s, 0, 0, \delta_0)\rvert \dif s + 2L_f\int_0^T \bigl(\EE[\lvert Y^k_s\rvert^2 + \lvert Z^k_s\rvert^2]\bigr)^{1/2} \dif s \\
  &\quad + \EE\int_0^T (\psi_s + L_g\lvert Y^k_s\rvert) \dif \kappa_s.
\end{align*}

Substituting this estimate of $K^k_T$ into the previous equation, dealing with each term using Young's inequality ($2ab \le{} \epsilon a^2 + b^2/\epsilon$), and choosing a sufficiently large parameter $\alpha>0$ and a sufficiently small $\epsilon>0$, all terms containing $Y^k$ and $Z^k$ on the right-hand side can be absorbed into the left-hand side. Consequently, there exists a constant $C>0$ independent of $k$, such that:
\begin{equation}
  \sup_{0\le{} t\le{} T} \EE[\lvert Y^k_t\rvert^2] + \EE\int_0^T \lvert Z^k_s\rvert^2 \dif s + (K^k_T)^2 \le{} C. \label{uniform_bound_k}
\end{equation}

Next, we prove that $(Y^k, Z^k)_{k={}1}^\infty$ is a Cauchy sequence. For any $k, m\ge{} 1$, denote $\delta Y_t :={} Y^k_t - Y^m_t$, $\delta Z_t :={} Z^k_t - Z^m_t$, and $\delta K_t :={} K^k_t - K^m_t$. Applying Itô's formula to $e^{\alpha t} \lvert\delta Y_t\rvert^2$ and taking the expectation, similar to the estimation of the approximation error, we focus on the cross-terms of the reflection term:
\begin{align*}
  2\int_t^T e^{\alpha s} \EE[\delta Y_s] \dif \delta K_s &={} 2\int_t^T e^{\alpha s} (\EE[Y^k_s] - \EE[Y^m_s]) \dif (K^k_s - K^m_s) \\
  &={} 2\int_t^T e^{\alpha s} (\EE[Y^k_s] - u^k_s) \dif K^k_s - 2\int_t^T e^{\alpha s} (\EE[Y^k_s] - u^k_s) \dif K^m_s \\
  &\quad - 2\int_t^T e^{\alpha s} (\EE[Y^m_s] - u^m_s) \dif K^k_s + 2\int_t^T e^{\alpha s} (\EE[Y^m_s] - u^m_s) \dif K^m_s \\
  &\quad + 2\int_t^T e^{\alpha s} (u^k_s - u^m_s) \dif (K^k_s - K^m_s).
\end{align*}

By the Skorokhod flatness condition, the first and fourth terms in the above expression are identically zero; moreover, since $\EE[Y^k_s] \ge{} u^k_s$ and the measure $\dif K^m_s \ge{} 0$, the second and third terms are both non-positive. Dropping them yields the following upper bound:
\begin{align*}
  2\int_t^T e^{\alpha s} \EE[\delta Y_s] \dif \delta K_s &\le{} 2\int_t^T e^{\alpha s} (u^k_s - u^m_s) \dif (K^k_s - K^m_s) \\
  &\le{} 2 e^{\alpha T} \sup_{0\le{} s\le{} T} \lvert u^k_s - u^m_s\rvert (K^k_T + K^m_T).
\end{align*}

From the proven inequalities that $K^k_T, K^m_T \le{} \sqrt{C}$, along with the Lipschitz and monotonicity assumptions for $f$ and $g$, choosing a sufficiently large $\alpha$, and applying the BDG inequality to handle the martingale term, a derivation similar to the a priori estimates gives:
\begin{equation*}
  \EE\Bigl[\sup_{0\le{} t\le{} T} \lvert\delta Y_t\rvert^2\Bigr] + \EE\int_0^T \lvert\delta Z_s\rvert^2 \dif s \le{} C' \sup_{0\le{} s\le{} T} \lvert u^k_s - u^m_s\rvert,
\end{equation*}
where $C'$ is also a constant independent of $k$ and $m$. Since $u^k$ converges uniformly to $u$, the above expression tends to zero as $k, m \to \infty$. This implies that $(Y^k, Z^k)_{k={}1}^\infty$ is a Cauchy sequence.

Let its limit in the corresponding space be $(Y, Z)$. From the dynamics, we obtain the explicit expression for $K^k_t$:
\begin{equation*}
  K^k_t ={} Y^k_0 - Y^k_t - \int_0^t f(s, Y^k_s, Z^k_s, \nu^k_s) \dif s - \int_0^t g(s, Y^k_s) \dif \kappa_s + \int_0^t Z^k_s \dif B_s.
\end{equation*}

As $k\to\infty$, the terms on the right-hand side of the equation converge uniformly in the $L^2$-sense. Thus, the limit process $K_t$ exists, and $K$ inherits the properties of $K^k$, namely being deterministic, continuous, monotonically non-decreasing, and satisfying $K_0 ={} 0$. Obviously, $(Y, Z, K)$ satisfies the dynamics.

Regarding the reflection condition, since $\EE[Y^k_t] \ge{} u^k_t$, taking the limit directly yields $$\EE[Y_t] \ge{} u_t.$$

Finally, we verify the Skorokhod flatness condition: it is known that $\int_0^T (\EE[Y^k_t] - u^k_t) \dif K^k_t ={} 0$. Since $\EE[Y^k]$ converges uniformly to $\EE[Y]$ on $[0,T]$, $u^k$ converges uniformly to $u$, and the monotonically non-decreasing process $K^k$ converges pointwise to $K$ with uniformly bounded total variation, we can split the difference of the integrals as:
\begin{align*}
&\Bigl\lvert \int_0^T (\EE[Y^k_t] - u^k_t) \dif K^k_t - \int_0^T (\EE[Y_t] - u_t) \dif K_t \Bigr\rvert \\
\le{} &\sup_{0\le{} t\le{} T} \bigl\lvert (\EE[Y^k_t] - u^k_t) - (\EE[Y_t] - u_t)\bigr\rvert \, K^k_T + \Bigl\lvert \int_0^T (\EE[Y_t] - u_t) \dif (K^k_t - K_t)\Bigr\rvert.
\end{align*}

The first term on the right-hand side of the above equation tends to zero due to the uniform convergence of the integrand and the uniform boundedness of $K^k_T$; for the second term, since the continuous function $\EE[Y_t] - u_t$ is integrated with respect to a weakly convergent measure, it also tends to zero according to the Helly-Bray theorem (or the convergence of the Riemann-Stieltjes integral for continuous functions). Therefore:
\begin{equation*}
  \int_0^T (\EE[Y_t] - u_t) \dif K_t ={} \lim_{k\to\infty} \int_0^T (\EE[Y^k_t] - u^k_t) \dif K^k_t ={} 0.
\end{equation*}

In conclusion, the limit $(Y, Z, K)$ satisfies all the conditions of a solution, and the existence of the solution is proved.
\end{proof}

\providecommand{\bysame}{\leavevmode\hbox to3em{\hrulefill}\thinspace}
\providecommand{\MR}{\relax\ifhmode\unskip\space\fi MR }
\providecommand{\MRhref}[2]{%
  \href{http://www.ams.org/mathscinet-getitem?mr=#1}{#2}
}
\providecommand{\href}[2]{#2}

\end{document}